\newtheorem{theorem}{Theorem}
\newtheorem{corollary}[theorem]{Corollary}
\newtheorem{definition}[theorem]{Definition}
\newtheorem{example}[theorem]{Example}
\newtheorem{lemma}[theorem]{Lemma}
\numberwithin{equation}{section} \numberwithin{theorem}{section}
\newenvironment{proof}[1][Proof]{\noindent\textbf{#1.} }{\ \rule{0.5em}{0.5em}}
\begin{document}

%--------------------------------------------------------------------------------
%--------------------------------------------------------------------------------
%-------     CABECERA           --------------------------------------------
%--------------------------------------------------------------------------------
%--------------------------------------------------------------------------------

\vspace{2cm}

{\textbf{\Large{This article has been accepted
and is scheduled for publication in JAP on March 2015}}}

\vspace{3cm}

\textbf{To cite this article}:
Nuria Torrado and Subhash C. Kochar (2015),
Stochastic order relations among parallel systems from Weibull distributions,
to appear in Journal of Applied Probability.

\vspace{3cm}

Thanks

%--------------------------------------------------------------------------------
%--------------------------------------------------------------------------------
%-------     ARTICULO           --------------------------------------------
%--------------------------------------------------------------------------------
%--------------------------------------------------------------------------------

\newpage

\title{Stochastic order relations among parallel systems \\ from Weibull
distributions}
\author{ Nuria Torrado \\
\small{Centre for Mathematics, University of Coimbra}\\
\small{Apartado 3008, EC Santa Cruz, 3001-501 Coimbra, Portugal} \\ \\
Subhash C. Kochar\\
\small{Fariborz Maseeh Department of Mathematics and Statistics}\\
\small{Portland State University, Portland, OR 97006, USA}}
\maketitle

\begin{abstract}
Let $X_{\lambda _{1}},X_{\lambda _{2}},\ldots ,X_{\lambda _{n}}$ be
independent Weibull random variables with $X_{\lambda _{i}}\sim
W(\alpha,\lambda _{i})$ where $\lambda _{i}>0$, for $i=1,\ldots ,n$. Let $%
X_{n:n}^{\lambda }$ denote the lifetime of the parallel system formed from $%
X_{\lambda _{1}},X_{\lambda _{2}},\ldots ,X_{\lambda _{n}}$. We investigate
the effect of the changes in the scale parameters $(\lambda_1, \ldots,
\lambda_n)$ on the magnitude of $X_{n:n}^{\lambda }$ according to
reverse hazard rate and likelihood ratio orderings.

\textbf{Keywords}: likelihood ratio order; reverse hazard rate order;
majorization; order statistics; multiple-outlier model

\textbf{Mathematics Subject Classification (2010)} 62G30 ; 60E15 ; 60K10
\end{abstract}

%%%%%%%%%%%%%%%%%%%%%%%%%%%%%%%%%%%%%%%%%%%%%%%%%%%%%%%%%%%%
\bigskip

%-----------------------------------------------------------------------------
%-----------------------------------------------------------------------------

\section{Introduction}

There is an extensive literature on stochastic orderings among order
statistics and spacings when the observations follow the exponential
distribution with different scale parameters, see for instance, \cite%
{Dykstra:1997, Bon:1999, Khaledi:2000, Wen:2007, Zhao:2009, Torrado:2013}
and the references therein. Also see a review paper by \cite{Kochar:2012} on
this topic. A natural way to extend these works is to consider random
variables with Weibull distributions since it includes exponential
distributions.

Let $X_{\lambda _{1}},X_{\lambda _{2}},\ldots ,X_{\lambda _{n}}$ be
independent Weibull random variables with $X_{\lambda _{i}}\sim W(\alpha
,\lambda _{i})$, $i=1,\ldots ,n$, where $\lambda _{i}>0,i=1,\ldots ,n$,
i.e., with density function%
\begin{equation*}
f_{i}(t)=\alpha \lambda _{i}\left( \lambda _{i}t\right) ^{\alpha
-1}e^{-\left( \lambda _{i}t\right) ^{\alpha }},\text{ }t>0.
\end{equation*}%
Let $h_{i}$ and $r_{i}$\ be the hazard rate and the reverse hazard rate
functions of $X_{\lambda _{i}}$, respectively. We denote by $%
X_{n:n}^{\lambda }$ the lifetime of the parallel system formed from $%
X_{\lambda _{1}},X_{\lambda _{2}},\ldots ,X_{\lambda _{n}}$. Then, its
distribution function is given by%
\begin{equation*}
F_{n:n}^{\lambda }(t)=\prod\limits_{i=1}^{n}F_{i}(t),
\end{equation*}%
its density function is%
\begin{equation*}
f_{n:n}^{\lambda }(t)=\prod\limits_{i=1}^{n}F_{i}(t)\sum_{i=1}^{n}r_{i}(t),
\end{equation*}%
and its reverse hazard rate function is%
\begin{equation}
r_{n:n}^{\lambda }(t)=\sum_{i=1}^{n}r_{i}(t)=\sum_{i=1}^{n}\frac{\alpha
\lambda _{i}\left( \lambda _{i}t\right) ^{\alpha -1}}{e^{\left( \lambda
_{i}t\right) ^{\alpha }}-1}.  \label{ecrh}
\end{equation}

For $0<\alpha \leq 1$, \cite{Khaledi:2006} proved that order statistics from
Weibull distributions with a common shape parameter and with scale
parameters as $(\lambda _{1},\ldots ,\lambda _{n})$ and $(\theta_1, \ldots,
\theta_n)$ are ordered in the usual stochastic order if one vector of scale
parameters majorizes the other one. For the proportional hazard rate model,
they also investigated the hazard rate and the dispersive orders among
parallel systems of a set of $n$ independent and non-identically distributed
random variables with that corresponding to a set of $n$ independent and
identically distributed random variables. Similar results for Weibull
distributions are also obtained by \cite{Fang:2012}.

In this article, we focus on stochastic orders to compare the magnitudes of
two parallel systems from Weibull distributions when one set of scale
parameters majorizes the other. The new results obtained here extend some of
those proved by \cite{Dykstra:1997} and \cite{Joo:2010} from exponential to
Weibull distributions. Also, we present some results for parallel systems
from multiple-outlier Weibull models.

The rest of the paper is organized as follows. In Section \ref{sec:def}, we
give the required definitions. We present some useful lemmas in Section \ref%
{sec:pre} which are used throughout the paper . In the last section  we
establish some new results on likelihood ratio ordering among parallel
systems from Weibull distributions.

%-----------------------------------------------------------------------------
%-----------------------------------------------------------------------------

\section{Basic definitions}

\label{sec:def}

In this section, we review some definitions and well-known notions of
stochastic orders and majorization concepts. Throughout this article \emph{%
increasing}\ means \emph{non-decreasing}\ and \emph{decreasing}\ means \emph{%
non-increasing}.

Let $X$ and $Y$ be univariate random variables with cumulative distribution
functions (c.d.f.'s) $F$ and $G$, survival functions $\Bar{F}\left(= 1 -
F\right)$ and $\Bar{G}\left(= 1 - G\right)$, p.d.f.'s $f$ and $g$, hazard
rate functions $h_{F}\left(=f/\,\Bar{F}\right)$ and $h_{G}\left(=g/\,\Bar{G}%
\right)$, and reverse hazard rate functions $r_{F}\left(=f/F\right)$ and $%
r_{G}\left(=g/G\right)$, respectively. The following definitions introduce
stochastic orders, which are considered in this article, to compare the
magnitudes of two random variables. For more details on stochastic
comparisons, see \cite{Shaked:2007}.

\begin{definition}
We say that $X$ is smaller than $Y$ in the:

\begin{itemize}
\item[a)] usual stochastic order, denoted by $X\leq_{st}Y$ or $F\leq_{st}G$,
if $\Bar{F}(t)\leq\Bar{G}(t)$ for all $t$,

\item[b)] hazard rate order, denoted by $X\leq_{hr}Y$ or $F\leq_{hr}G$, if $%
\Bar{G}(t)/\,\Bar{F}(t)$ is increasing in $t$ for all $t$ for which this
ratio is well defined,

\item[c)] reverse hazard rate order, denoted by $X\leq_{rh}Y$ or $%
F\leq_{rh}G $, if $G(t)/F(t)$ is increasing in $t$ for all $t$ for which the
ratio is well defined,

\item[d)] likelihood ratio order, denoted by $X\leq_{lr}Y$ or $F\leq_{lr}G$,
if $g(t)/f(t)$ is increasing in $t$ for all $t$ for which the ratio is well
defined.
\end{itemize}
\end{definition}

In this paper, we shall be using the following Theorem 1.C.4 of Shaked and
Shanthikumar (2007).

\begin{description}
\item {(a)} If $X\leq_{hr} Y$ and if $\frac{h_Y(t)}{h_X(t)}$ increases in $t$%
, then $X\leq_{lr} Y$.

\item {(b)} If $X\leq_{rh} Y$ and if $\frac{r_Y(t)}{r_X(t)}$ increases in $t$%
, then $X\leq_{lr} Y$.
\end{description}

We shall also be using the concept of majorization in our discussion. Let $%
\{ x_{(1)}, x_{(2)},\ldots, x_{(n)} \}$ denote the increasing arrangement of
the components of the vector ${\bm x}=\left(x_{1},x_{2},\ldots,x_{n}\right)$.

\begin{definition}
The vector ${\bm x}$ is said to be majorized by the vector ${\bm y}$,
denoted by ${\bm x}\leq^{m}{\bm y}$, if
\begin{equation*}
\sum_{i=1}^{j}x_{(i)}\geq\sum_{i=1}^{j}y_{(i)},\quad\text{for }
j=1,\ldots,n-1 \quad \text{ and } \quad
\sum_{i=1}^{n}x_{(i)}=\sum_{i=1}^{n}y_{(i)}.
\end{equation*}
\end{definition}

Functions that preserve the ordering of majorization are said to be
Schur-convex as defined below.

\begin{definition}
\label{ch0:def:04} A real valued function $\varphi$ defined on a set $%
\mathcal{A}\in\Re^{n}$ is said to be \emph{Schur-convex} (\emph{Schur-concave%
}) on $\mathcal{A}$ if
\begin{equation*}
{\bm x} \leq^{m}{\bm y}\, \text{on }\,\mathcal{A}\Rightarrow \varphi({\bm x}
)\leq (\geq) \varphi({\bm y} ).
\end{equation*}
\end{definition}

A concept of weak majorization is the following.

\begin{definition}
The vector ${\bm x}$ is said to be weakly majorized by the vector ${\bm y}$,
denoted by ${\bm x}\leq^{w}{\bm y}$, if
\begin{equation*}
\sum_{i=1}^{j}x_{(i)}\geq\sum_{i=1}^{j}y_{(i)},\quad\text{for } j=1,\ldots,n.
\end{equation*}
\end{definition}

It is known that ${\bm x}\leq^{m}{\bm y}\Rightarrow {\bm x}\leq^{w}{\bm y}$.
The converse is, however, not true. For extensive and comprehensive details
on the theory of majorization orders and their applications, please refer to
the book of \cite{Marshall:1979}.

%-----------------------------------------------------------------------------
%-----------------------------------------------------------------------------
%-----------------------------------------------------------------------------
%-----------------------------------------------------------------------------

\section{Preliminaries results}

\label{sec:pre}

In this section, we first preset several useful lemmas which will be used in
the next section to prove our main results.

\begin{lemma}
\label{lemU} For $t\geq 0$, the function
\begin{equation}
u(t)=\frac{t^{\alpha }}{e^{t^{\alpha }}-1}  \label{eclemU}
\end{equation}%
is decreasing for any $\alpha >0$ and convex for $0<\alpha \leq 1$.
\end{lemma}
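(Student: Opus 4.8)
The plan is to reduce everything to the single-variable function $g(s)=s/(e^{s}-1)$ via the substitution $s=t^{\alpha}$, so that $u(t)=g(t^{\alpha})$. Since $\phi(t):=t^{\alpha}$ is increasing for every $\alpha>0$, the monotonicity of $u$ will follow at once from the monotonicity of $g$, while the convexity of $u$ will follow from a composition argument once $g$ has been shown to be both convex and decreasing.

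First I would establish that $g$ is decreasing on $[0,\infty)$. Differentiating gives $g'(s)=\big(e^{s}-1-se^{s}\big)/(e^{s}-1)^{2}$, so the sign of $g'$ is that of $N(s):=e^{s}-1-se^{s}$. Since $N(0)=0$ and $N'(s)=-se^{s}\le 0$, the numerator $N$ is nonpositive on $[0,\infty)$, whence $g'\le 0$ and $g$ is decreasing. Composing with the increasing map $\phi$ shows that $u$ is decreasing for all $\alpha>0$, which settles the first assertion.

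Next, for the convexity I would show that $g$ itself is convex on $[0,\infty)$. A direct computation of the second derivative yields $g''(s)=e^{s}\big[e^{s}(s-2)+(s+2)\big]/(e^{s}-1)^{3}$, so on $(0,\infty)$ the sign of $g''$ coincides with that of $P(s):=e^{s}(s-2)+(s+2)$. Here one checks $P(0)=0$, $P'(0)=0$, and $P''(s)=se^{s}\ge 0$; integrating these inequalities upward gives $P'\ge 0$ and then $P\ge 0$ on $[0,\infty)$, so $g''\ge 0$ and $g$ is convex. Writing $u=g\circ\phi$ and differentiating twice then gives $u''(t)=g''(\phi(t))\,\phi'(t)^{2}+g'(\phi(t))\,\phi''(t)$; the first summand is nonnegative because $g$ is convex, and the hypothesis $0<\alpha\le 1$ enters precisely in the second, since $\phi''(t)=\alpha(\alpha-1)t^{\alpha-2}\le 0$ while $g'\le 0$ from the monotonicity step, so $g'(\phi)\phi''\ge 0$ as well. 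Hence $u''\ge 0$ on $(0,\infty)$, and convexity on $[0,\infty)$ follows by continuity (with $u(0)=1$). I expect the main obstacle to be this convexity step, namely verifying $g''\ge 0$: the quotient differentiation is delicate, and everything hinges on isolating the auxiliary factor $P(s)$ and reducing its positivity to the transparent inequality $P''(s)=se^{s}\ge 0$ together with the vanishing of $P$ and $P'$ at the origin.
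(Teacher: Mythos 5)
Your proposal is correct and follows essentially the same route as the paper: both write $u(t)=g(t^{\alpha})$ with $g(s)=s/(e^{s}-1)$, deduce monotonicity for all $\alpha>0$ from $g$ decreasing, and obtain convexity for $0<\alpha\le 1$ from the chain-rule identity $u''=g''(\phi)\,\phi'^{2}+g'(\phi)\,\phi''$, using $g'\le 0$, $g''\ge 0$, and $\phi''\le 0$. The only difference is that the paper declares the decreasingness and convexity of $g$ (its $\psi_{1}$) to be ``easy to check,'' whereas you verify them explicitly via the auxiliary functions $N(s)=e^{s}-1-se^{s}$ and $P(s)=e^{s}(s-2)+(s+2)$, which is a welcome filling-in of the omitted details.
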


\begin{proof}
Note that, for $t\geq 0$, $u(t)=\psi _{1}(t^{\alpha })$ with%
\begin{equation*}
\psi _{1}(t)=\frac{t}{e^{t}-1}.
\end{equation*}%
It is easy to check that $\psi _{1}(t)$ is a decreasing and convex function.
Therefore,%
\begin{equation*}
\frac{d}{dt}u(t)=\alpha t^{\alpha -1}\frac{d}{dt}\psi _{1}(t)\leq 0
\end{equation*}%
for any $\alpha >0$, and
\begin{equation*}
\frac{d^{2}}{dt^{2}}u(t)=\alpha (\alpha -1)t^{\alpha -2}\frac{d}{dt}\psi
_{1}(t)+\left( \alpha t^{\alpha -1}\right) ^{2}\frac{d^{2}}{dt^{2}}\psi
_{1}(t)\geq 0,
\end{equation*}%
since $0<\alpha \leq 1$. Hence $u(t)$ is decreasing for any $\alpha >0$ and
convex for $0<\alpha \leq 1$ in $[0,\infty ).$
\end{proof}

\begin{lemma}
\label{lemV} For $t\geq 0$, the function
\begin{equation}
v(t)=\frac{t^{\alpha }}{1-e^{-t^{\alpha }}}  \label{eclemV}
\end{equation}%
is increasing for any $\alpha >0$.
\end{lemma}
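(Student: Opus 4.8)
The plan is to mirror the substitution trick used in the proof of Lemma \ref{lemU}. Writing $v(t) = \psi_2(t^{\alpha})$ with
\[
\psi_2(s) = \frac{s}{1 - e^{-s}},
\]
it suffices to show that $\psi_2$ is increasing on $[0,\infty)$, since $s = t^{\alpha}$ is increasing in $t$ for every $\alpha > 0$ and the composition of increasing functions is increasing. This reduces a two-parameter statement to a single-variable monotonicity claim that no longer involves $\alpha$.

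To establish that $\psi_2$ is increasing, I would differentiate. A direct computation gives
\[
\psi_2'(s) = \frac{(1 - e^{-s}) - s e^{-s}}{(1 - e^{-s})^2} = \frac{1 - (1+s)e^{-s}}{(1 - e^{-s})^2}.
\]
The denominator is positive for $s > 0$, so the sign of $\psi_2'(s)$ is governed by the numerator $N(s) = 1 - (1+s)e^{-s}$. Hence the whole lemma comes down to verifying $N(s) \geq 0$ for $s \geq 0$.

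The single analytic input is the elementary inequality $e^{s} \geq 1 + s$, valid for all real $s$ with equality only at $s = 0$. Multiplying through by $e^{-s} > 0$ yields $1 \geq (1+s)e^{-s}$, which is exactly $N(s) \geq 0$. Thus $\psi_2'(s) \geq 0$, so $\psi_2$ is increasing, and therefore $v(t) = \psi_2(t^{\alpha})$ is increasing for every $\alpha > 0$.

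I do not anticipate a genuine obstacle here: unlike Lemma \ref{lemU}, no convexity claim is required and the conclusion holds for all $\alpha > 0$ rather than only $0 < \alpha \leq 1$, so the monotonicity of $t^{\alpha}$ alone carries the composition through. The only point needing mild care is the endpoint $s = 0$, where $\psi_2$ extends continuously by $\psi_2(0) = 1$ (read off from $1 - e^{-s} = s - s^{2}/2 + o(s^{2})$); once this is noted, monotonicity on the open interval extends to all of $[0,\infty)$.
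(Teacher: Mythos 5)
Your proof is correct and takes essentially the same route as the paper: both write $v(t)=\psi_{2}(t^{\alpha})$ with $\psi_{2}(s)=s/(1-e^{-s})$ and reduce the lemma to the monotonicity of $\psi_{2}$ on $[0,\infty)$. The only difference is that the paper cites Lemma 2.1 of \cite{Khaledi:2000} for that monotonicity, whereas you verify it directly from $e^{s}\geq 1+s$, which makes the argument self-contained but is otherwise identical.
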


\begin{proof}
Note that, for $t\geq 0$, $v(t)=\psi _{2}(t^{\alpha })$ with%
\begin{equation*}
\psi _{2}(t)=\frac{t}{1-e^{-t}}.
\end{equation*}%
As shown by \cite{Khaledi:2000} in Lemma 2.1, $\psi _{2}(t)$\ is increasing
in $t\geq 0 $. Hence $v(t)$ is also increasing because%
\begin{equation*}
\frac{d}{dt}v(t)=\alpha t^{\alpha -1}\frac{d}{dt}\psi _{2}(t)\geq 0\text{,}
\end{equation*}%
for any $\alpha >0$.
\end{proof}

\begin{lemma}
\label{lem02}For $t\geq 0$, the function%
\begin{equation}
\frac{te^{t}(1+t-e^{t})}{(e^{t}-1)^{3}}  \label{eclem}
\end{equation}%
is increasing.
\end{lemma}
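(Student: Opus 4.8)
The plan is to reduce the monotonicity of $g(t)=\frac{te^{t}(1+t-e^{t})}{(e^{t}-1)^{3}}$ to the positivity of a single auxiliary function, and then to establish that positivity by differentiating repeatedly until a manifestly positive expression appears. First I would write $g=p/q$ with $p(t)=te^{t}(1+t-e^{t})$ and $q(t)=(e^{t}-1)^{3}$, so that $g'=(p'q-pq')/q^{2}$. Since $q>0$ on $(0,\infty)$, the sign of $g'$ equals the sign of $p'q-pq'=(e^{t}-1)^{2}\bigl[p'(e^{t}-1)-3pe^{t}\bigr]$, where $p'(t)=e^{t}(1+3t+t^{2})-e^{2t}(1+2t)$. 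Expanding the bracket and collecting the coefficients of $e^{3t}$, $e^{2t}$ and $e^{t}$, everything organizes neatly and one further factor $e^{t}$ comes out, leaving
\begin{equation*}
g'(t)=\frac{e^{t}}{(e^{t}-1)^{4}}\,C(t),\qquad C(t)=(t-1)e^{2t}+2(1+t-t^{2})e^{t}-(1+3t+t^{2}).
\end{equation*}
Thus it suffices to prove $C(t)\ge 0$ for $t\ge 0$.

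The key observation is that $C$ together with its first three derivatives all vanish at $t=0$; indeed a Taylor expansion gives $C(t)=\frac{1}{12}t^{4}+o(t^{4})$, which is exactly what signals a successive-differentiation argument. I would compute
\begin{equation*}
C'(t)=(2t-1)e^{2t}+2(2-t-t^{2})e^{t}-(3+2t),\qquad C''(t)=4te^{2t}+2(1-3t-t^{2})e^{t}-2,
\end{equation*}
verifying $C(0)=C'(0)=C''(0)=0$ along the way, and then
\begin{equation*}
C'''(t)=2e^{t}\bigl[\,2e^{t}(1+2t)-(2+5t+t^{2})\,\bigr]=:2e^{t}D(t).
\end{equation*}

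The crux is then to show $D(t)=2e^{t}(1+2t)-(2+5t+t^{2})\ge 0$ on $[0,\infty)$, and here the cascade becomes elementary. One has $D''(t)=2e^{t}(5+2t)-2\ge 2\cdot 5-2>0$ for every $t\ge 0$, so $D'$ is increasing and $D'(t)\ge D'(0)=1>0$, whence $D$ is increasing and $D(t)\ge D(0)=0$. Consequently $C'''\ge 0$, and integrating the inequalities upward, $C'''\ge 0\Rightarrow C''\uparrow\Rightarrow C''\ge C''(0)=0\Rightarrow C'\uparrow\Rightarrow C'\ge 0\Rightarrow C\uparrow\Rightarrow C\ge C(0)=0$, yields $C\ge 0$ and hence $g'\ge 0$ on $(0,\infty)$. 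Since $g$ extends continuously to the value $-\tfrac12$ at the origin, monotonicity propagates to all of $[0,\infty)$.

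The main obstacle is purely computational: carrying out the first differentiation and the algebraic simplification that collapses $p'q-pq'$ into the clean form $e^{t}(e^{t}-1)^{2}C(t)$ is the step most prone to sign errors, and I would guard against this by cross-checking with the Taylor expansion of $C$, whose leading term must indeed be of order $t^{4}$. Once $C$ is in hand the descent to $D''$ is routine, and the fact that each intermediate function vanishes to precisely the right order at the origin is exactly what allows the repeated-integration argument to close without any delicate estimates.
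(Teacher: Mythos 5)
Your proposal is correct and follows essentially the same route as the paper: the paper also reduces the sign of $g'$ to the nonnegativity of exactly your $C(t)=(t-1)e^{2t}+2(1+t-t^2)e^{t}-(1+3t+t^{2})$ (called $M_1$ there) and runs the same successive-differentiation cascade with vanishing boundary values at $0$, arriving at the same innermost bracket $D(t)=2e^{t}(1+2t)-(2+5t+t^{2})$. The only (minor) difference is the finishing step: the paper disposes of $D\ge 0$ in one line via the inequality $e^{t}\ge 1+t$, whereas you differentiate $D$ twice more until a manifestly positive expression appears; both are valid, and your computations check out.
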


\begin{proof}
The derivative of (\ref{eclem}) is, for $t\geq 0$,%
\begin{equation*}
\frac{e^{t}}{(e^{t}-1)^{4}}(-1+e^{2t}(t-1)-t(3+t)+e^{t}(2-2t(t-1))).
\end{equation*}%
Thus, it is sufficient to prove that%
\begin{equation*}
M_{1}(t)=-1+e^{2t}(t-1)-t(3+t)+e^{t}(2-2t(t-1))\geq 0.
\end{equation*}%
Since $M_{1}(0)=0$, we have to prove that
\begin{equation*}
M_{1}^{\prime }(t)=e^{2t}(2t-1)-3-20+2e^{t}(2-t(1+t))\geq 0.
\end{equation*}%
Again, $M_{1}^{\prime }(0)=0$, so we have to prove that%
\begin{equation*}
M_{1}^{\prime \prime }(t)=2(-1+e^{t}(1-t(3-2e^{t}+t)))\geq 0.
\end{equation*}%
Denote, for $t\geq 0$,%
\begin{equation*}
M_{2}(t)=-1+e^{t}(1-t(3-2e^{t}+t)).
\end{equation*}%
Since the derivative of $M_{2}(t)$ is,%
\begin{equation*}
M_{2}^{\prime }(t)=e^{t}(-2-t(5+t)+e^{t}(2+4t))
\end{equation*}%
and $e^{t}\geq 1+t$, it follows that $M_{2}^{\prime }(t)\geq 0$, because%
\begin{equation*}
-2-t(5+t)+e^{t}(2+4t)\geq -2-t(5+t)+(1+t)(2+4t)=t(1+3t)\geq 0.
\end{equation*}%
That is, $M_{2}(t)$ is increasing in $t\geq 0$. Observing that $M_{2}(0)=0$,
we have $M_{2}(t)\geq 0$ for $t\geq 0$. The required result follows
immediately.
\end{proof}

\begin{lemma}
\label{lemW}For $t\geq 0$, the function%
\begin{equation}
w(t)=\frac{\alpha t^{2\alpha -1}e^{t^{\alpha }}(1+t^{\alpha }-e^{t^{\alpha
}})}{(e^{t^{\alpha }}-1)^{3}}  \label{eclemW}
\end{equation}%
is increasing for $0<\alpha \leq 1$.
\end{lemma}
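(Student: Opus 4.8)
The plan is to recognise that $w$ factors through the function already handled in Lemma~\ref{lem02}. Writing $g(s)=\dfrac{s e^{s}(1+s-e^{s})}{(e^{s}-1)^{3}}$ for the function of that lemma, I would first use $t^{2\alpha-1}=t^{\alpha-1}\,t^{\alpha}$ to rewrite
$$w(t)=\alpha t^{\alpha-1}\cdot\frac{t^{\alpha}e^{t^{\alpha}}(1+t^{\alpha}-e^{t^{\alpha}})}{(e^{t^{\alpha}}-1)^{3}}=\alpha t^{\alpha-1}\,g\!\left(t^{\alpha}\right).$$
This reduces the whole problem to the monotonicity of a product of two elementary factors.

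Next I would record the sign and monotonicity of each factor on $(0,\infty)$. Put $p(t)=\alpha t^{\alpha-1}$ and $q(t)=g(t^{\alpha})$. For $0<\alpha\le 1$ the factor $p$ is positive with derivative $p'(t)=\alpha(\alpha-1)t^{\alpha-2}\le 0$, so $p$ is positive and decreasing. For $q$, the inequality $e^{t^{\alpha}}\ge 1+t^{\alpha}$ gives $1+t^{\alpha}-e^{t^{\alpha}}\le 0$, hence $g(t^{\alpha})\le 0$; and since $g$ is increasing by Lemma~\ref{lem02} while $t\mapsto t^{\alpha}$ is increasing for $\alpha>0$, the composition $q(t)=g(t^{\alpha})$ is increasing, so $q'\ge 0$.

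Finally I would differentiate the product and read off the sign: from $w=pq$,
$$w'(t)=p'(t)\,q(t)+p(t)\,q'(t).$$
The first summand is the product of the nonpositive $p'$ with the nonpositive $q$, hence nonnegative; the second is the product of the nonnegative $p$ with the nonnegative $q'$, hence nonnegative. Thus $w'(t)\ge 0$ for $t>0$, which gives the claim. I do not expect any genuine obstacle here: the heavy lifting was already done in Lemma~\ref{lem02}, whose proof establishes the increasingness of $g$ through a chain of second-derivative sign estimates, and the only subtle point is the role of the hypothesis $0<\alpha\le 1$, which is precisely what makes $p'\le 0$ (for $\alpha>1$ the factor $\alpha t^{\alpha-1}$ would be increasing and this clean sign bookkeeping would fail).
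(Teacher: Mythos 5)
Your proof is correct and is essentially identical to the paper's: the same factorization $w(t)=\alpha t^{\alpha-1}\psi_3(t^{\alpha})$ with $\psi_3$ the function of Lemma~\ref{lem02}, followed by the same sign bookkeeping (nonpositive decreasing factor times nonpositive increasing factor). Your $p$--$q$ product-rule presentation is just a repackaging of the paper's one-line derivative computation, so there is nothing to add.
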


\begin{proof}
Note that, for $t\geq 0$,
\begin{equation*}
w(t)=\alpha t^{\alpha -1}\psi _{3}(t^{\alpha }),
\end{equation*}%
with%
\begin{equation*}
\psi _{3}(t)=\frac{te^{t}(1+t-e^{t})}{(e^{t}-1)^{3}}.
\end{equation*}%
From Lemma \ref{lem02} we know that $\psi _{3}(t)$ is an increasing
function, then
\begin{equation*}
\frac{d}{dt}w(t)=\alpha (\alpha -1)t^{\alpha -2}\psi _{3}(t^{\alpha
})+\left( \alpha t^{\alpha -1}\right) ^{2}\frac{d}{dt}\psi _{3}(t)\geq 0,
\end{equation*}%
since $0<\alpha \leq 1$ and $\psi _{3}(t)\leq 0$ because $e^{t}\geq 1+t$ for
$t\geq 0$. Hence $w(t)$ is increasing in $[0,\infty )$ for $0<\alpha \leq 1.$
\end{proof}

%-----------------------------------------------------------------------------
%-----------------------------------------------------------------------------
%%
%%
%%

%%%%%%%%%%%%%%-------------------------------- Weibull distributions

\section{Main results}
\label{sec:parallel}

In this section, we establish likelihood ratio ordering between parallel
systems based on two sets of heterogeneous Weibull random variables with a
common shape parameter and with scale parameters which are ordered according
to a majorization order. First, we establish a comparison among parallel
systems according to reverse hazard rate ordering when the common shape
parameter $\alpha$ satisfies $0<\alpha \le 1 $.

\begin{theorem}
\label{th07} Let $X_{\lambda _{1}},X_{\lambda _{2}},\ldots ,X_{\lambda _{n}}$
be independent random variables with $X_{\lambda _{i}}\sim W(\alpha ,\lambda
_{i})$, where $\lambda _{i}>0$, $i=1,\ldots ,n$, and let $X_{\theta
_{1}},X_{\theta _{2}},\ldots ,X_{\theta _{n}}$ be another set of independent
random variables with $X_{\theta _{i}}\sim W(\alpha ,\theta _{i})$, where $%
\theta _{i}>0$, $i=1,\ldots ,n$. Then for $0<\alpha \leq 1$,%
\begin{equation*}
\left( \lambda _{1},\ldots ,\lambda _{n}\right) \preceq ^{w}\left( \theta
_{1},\ldots ,\theta _{n}\right) \Rightarrow X_{n:n}^{\lambda }\leq
_{rh}X_{n:n}^{\theta }.
\end{equation*}
\end{theorem}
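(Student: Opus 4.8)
The plan is to turn the reverse hazard rate comparison into a pointwise inequality between reverse hazard rate functions, and then to read that inequality off as the statement that a \emph{decreasing, Schur-convex} symmetric function does not decrease when its argument is replaced by a weakly supermajorizing vector. Recall first that $X_{n:n}^{\lambda}\leq_{rh}X_{n:n}^{\theta}$ holds if and only if $F_{n:n}^{\theta}(t)/F_{n:n}^{\lambda}(t)$ is increasing, which (taking logarithmic derivatives) is equivalent to $r_{n:n}^{\lambda}(t)\leq r_{n:n}^{\theta}(t)$ for all $t\geq 0$. So it suffices to compare the two reverse hazard rates given by (\ref{ecrh}).

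Next I would rewrite the reverse hazard rate of the parallel system in terms of the function $u$ of Lemma \ref{lemU}. A direct factorization gives
\[
r_{n:n}^{\lambda}(t)=\sum_{i=1}^{n}\frac{\alpha \lambda_{i}(\lambda_{i}t)^{\alpha-1}}{e^{(\lambda_{i}t)^{\alpha}}-1}=\frac{\alpha}{t}\sum_{i=1}^{n}u(\lambda_{i}t).
\]
Since $\alpha/t>0$, proving the theorem reduces to showing, for each fixed $t>0$, that $\sum_{i=1}^{n}u(\lambda_{i}t)\leq\sum_{i=1}^{n}u(\theta_{i}t)$. Fixing $t>0$, I would study the symmetric function $\phi_{t}(\bm{x})=\sum_{i=1}^{n}u(x_{i}t)$ on $(0,\infty)^{n}$ and extract two structural properties directly from Lemma \ref{lemU}. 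Because $u$ is decreasing for every $\alpha>0$, each coordinate map $x_{i}\mapsto u(x_{i}t)$ is decreasing (as $t>0$), so $\phi_{t}$ is decreasing in each argument. Because $u$ is convex for $0<\alpha\leq 1$, each coordinate map is convex, and a sum of convex functions of the separate coordinates is Schur-convex. Thus $\phi_{t}$ is simultaneously decreasing and Schur-convex exactly in the regime $0<\alpha\leq 1$.

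I would then invoke the standard characterization from \cite{Marshall:1979} that a symmetric function which is decreasing and Schur-convex preserves weak supermajorization: if $\bm{\lambda}\preceq^{w}\bm{\theta}$ then $\phi_{t}(\bm{\lambda})\leq\phi_{t}(\bm{\theta})$. Since this holds for every $t>0$, it yields $r_{n:n}^{\lambda}(t)\leq r_{n:n}^{\theta}(t)$ pointwise, which is precisely $X_{n:n}^{\lambda}\leq_{rh}X_{n:n}^{\theta}$.

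The delicate point is the interplay between the weakened majorization hypothesis and the two halves of Lemma \ref{lemU}. Weak supermajorization $\preceq^{w}$ is strictly weaker than $\preceq^{m}$ because it drops the equal-sum constraint, so Schur-convexity alone does \emph{not} suffice; one genuinely needs the monotonicity of $u$ in addition. Keeping track of which property comes from where---decreasingness of $u$ (valid for all $\alpha>0$) supplies the monotone half, while convexity of $u$ (valid only for $0<\alpha\leq 1$) supplies the Schur-convex half---is the main bookkeeping obstacle, and it is exactly the convexity requirement that pins down the restriction $0<\alpha\leq 1$.
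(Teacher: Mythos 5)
Your proposal is correct and follows essentially the same route as the paper: the same factorization $r_{n:n}^{\lambda}(t)=\frac{\alpha}{t}\sum_{i=1}^{n}u(\lambda_{i}t)$, the same two ingredients from Lemma \ref{lemU} (decreasingness of $u$ for all $\alpha>0$, convexity for $0<\alpha\leq 1$ giving Schur-convexity via the separable-sum criterion, Proposition C.1 of \cite{Marshall:1979}), and the same invocation of the Marshall--Olkin result (Theorem A.8) that decreasing Schur-convex functions preserve weak supermajorization. The only difference is cosmetic: you spell out explicitly the reduction of the reverse hazard rate order to the pointwise inequality $r_{n:n}^{\lambda}(t)\leq r_{n:n}^{\theta}(t)$, which the paper leaves implicit.
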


\begin{proof}
Fix $t\geq 0$. Then the reverse hazard rate of $X_{n:n}$ is%
\begin{equation*}
r_{n:n}^{\lambda }(t)=\frac{\alpha }{t}\sum_{i=1}^{n}\frac{\left( \lambda
_{i}t\right) ^{\alpha }}{e^{\left( \lambda _{i}t\right) ^{\alpha }}-1}=\frac{%
\alpha }{t}\sum_{i=1}^{n}u(\lambda _{i}t),
\end{equation*}%
where $u(t)$ is defined as in (\ref{eclemU}). From Theorem A.8 of \cite%
{Marshall:1979} (p.59) it suffices to show that, for each $t\geq 0$, $%
r_{n:n}^{\lambda }(t)$ is decreasing in each $\lambda _{i}$, $i=1,\ldots ,n$%
, and is a Schur-convex function of $\left( \lambda _{1},\ldots ,\lambda
_{n}\right) $. It is well known that the hazard rate of the Weibull
distribution is decreasing in $t\geq 0$ when $0<\alpha \leq 1$ (see \cite%
{Marshall:2007}, p. 324), and therefore, its reverse hazard rate function is
also decreasing. Clearly, from (\ref{ecrh}), the reverse hazard rate
function of $X_{n:n}$ is decreasing in each $\lambda _{i}$. Now, from
Proposition C.1 of \cite{Marshall:1979} (p. 64), in order to establish the
Schur-convexity of $r_{n:n}^{\lambda }(t)$, it is enough to prove the
convexity of $u(t)$. Note that, from Lemma \ref{lemU}, we know that $u(t)$
is a convex function for $0<\alpha \leq 1$. Hence $r_{n:n}^{\lambda }(t)$ is
a Schur-convex function of $\left( \lambda _{1},\ldots ,\lambda _{n}\right) $%
.
\end{proof}

\vskip .5em Since ${\bm x}\leq ^{m}{\bm y}\Rightarrow {\bm x}\leq ^{w}{\bm y}
$, the following corollary follows immediately from Theorem \ref{th07}.

\begin{corollary}
\label{cor1} Let $X_{\lambda _{1}},X_{\lambda _{2}},\ldots ,X_{\lambda _{n}}$
be independent random variables with $X_{\lambda _{i}}\sim W(\alpha ,\lambda
_{i})$, where $\lambda _{i}>0$, $i=1,\ldots ,n$, and let $X_{\theta
_{1}},X_{\theta _{2}},\ldots ,X_{\theta _{n}}$ be another set of independent
random variables with $X_{\theta _{i}}\sim W(\alpha ,\theta _{i})$, where $%
\theta _{i}>0$, $i=1,\ldots ,n$. Then for $0<\alpha \leq 1$,
\begin{equation*}
\left( \lambda _{1},\ldots ,\lambda _{n}\right) \preceq ^{m}\left( \theta
_{1},\ldots ,\theta _{n}\right) \Rightarrow X_{n:n}^{\lambda }\leq
_{rh}X_{n:n}^{\theta }.
\end{equation*}
\end{corollary}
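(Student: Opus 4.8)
The plan is to obtain this directly from Theorem \ref{th07}, which already establishes the same reverse hazard rate ordering under the ostensibly weaker hypothesis of weak majorization. The only additional ingredient needed is the elementary implication, recorded in Section \ref{sec:def}, that ordinary majorization is stronger than weak majorization.

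First I would unwind the definitions. Suppose $(\lambda_1,\ldots,\lambda_n)\preceq^m(\theta_1,\ldots,\theta_n)$. By the definition of majorization, the increasing rearrangements satisfy $\sum_{i=1}^{j}\lambda_{(i)}\geq\sum_{i=1}^{j}\theta_{(i)}$ for $j=1,\ldots,n-1$, together with the total-sum equality $\sum_{i=1}^{n}\lambda_{(i)}=\sum_{i=1}^{n}\theta_{(i)}$. In particular the $j=n$ partial-sum inequality holds as well (with equality), so all $n$ inequalities in the definition of weak majorization are satisfied; that is, $(\lambda_1,\ldots,\lambda_n)\preceq^w(\theta_1,\ldots,\theta_n)$.

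Then I would simply apply Theorem \ref{th07} with the same common shape parameter $\alpha\in(0,1]$, which under weak majorization of the scale vectors yields $X_{n:n}^{\lambda}\leq_{rh}X_{n:n}^{\theta}$, completing the argument. Since the corollary is literally the restriction of Theorem \ref{th07} to the subclass of majorized (rather than merely weakly majorized) scale vectors, there is no genuine obstacle here; the sole point requiring verification is that the majorization hypothesis entails the weak-majorization hypothesis of the theorem, and this is immediate from the definitions.
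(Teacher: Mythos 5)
Your proposal is correct and follows exactly the paper's own argument: the paper likewise notes that $\bm{x}\preceq^{m}\bm{y}$ implies $\bm{x}\preceq^{w}\bm{y}$ and then invokes Theorem \ref{th07} to conclude $X_{n:n}^{\lambda}\leq_{rh}X_{n:n}^{\theta}$. Your additional unwinding of the definitions to verify the implication is fine but not needed beyond what the paper states.
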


Note that Corollary \ref{cor1} was proved by \cite{Khaledi:2011} for
generalized gamma distribution when $p=q<1$ which corresponds to Weibull
distribution with shape parameter $\alpha <1$.

%%%%%%%%%%%%%%%%%%%%%%%%%%%%%%%%%%%%%%%%%%%%%%%%%%%%%%%%%%%%%%%%%%%%%%%%%%%%%%%%%%%%%%%%
A natural question is whether the results of Theorem \ref{th07} and
Corollary \ref{cor1} can be strengthened from reverse hazard rate ordering
to likelihood ratio ordering. First we consider the case when $n=2$.

\begin{theorem}
\label{th08} Let $X_{\lambda _{1}},X_{\lambda _{2}}$ be independent random
variables with $X_{\lambda _{i}}\sim W(\alpha ,\lambda _{i})$ where $\lambda
_{i}>0$, $i=1,2$, and let $X_{\theta _{1}},X_{\theta _{2}}$ be independent
random variables with $X_{\theta _{i}}\sim W(\alpha ,\theta _{i})$ where $%
\theta _{i}>0$, $i=1,2$. Then%
\begin{equation*}
\left( \lambda _{1},\lambda _{2}\right) \preceq ^{m}\left( \theta
_{1},\theta _{2}\right) \Rightarrow \frac{r_{2:2}^{\theta }(t)}{%
r_{2:2}^{\lambda }(t)}\text{ is increasing in }t\text{, for }0<\alpha \leq 1.
\end{equation*}
\end{theorem}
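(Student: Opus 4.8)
The plan is to reduce the claim to a Schur-convexity statement and then invoke Lemma~\ref{lemW}. First I would write the reverse hazard rate of each parallel system in the form $r_{2:2}^{\nu}(t)=\frac{\alpha}{t}\left[u(\nu_1 t)+u(\nu_2 t)\right]$, with $u$ as in~(\ref{eclemU}); the common factor $\alpha/t$ then cancels in the ratio, giving
\[
\frac{r_{2:2}^{\theta}(t)}{r_{2:2}^{\lambda}(t)}=\frac{u(\theta_1 t)+u(\theta_2 t)}{u(\lambda_1 t)+u(\lambda_2 t)}.
\]
Writing $U_{\nu}(t)=u(\nu_1 t)+u(\nu_2 t)$, this ratio is increasing in $t$ if and only if $\frac{d}{dt}\log U_{\theta}(t)\ge \frac{d}{dt}\log U_{\lambda}(t)$ for every $t$. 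Hence it suffices to show that for each fixed $t>0$ the symmetric function
\[
\Psi_{t}(\nu_1,\nu_2)=\frac{U_{\nu}'(t)}{U_{\nu}(t)}=\frac{\nu_1 u'(\nu_1 t)+\nu_2 u'(\nu_2 t)}{u(\nu_1 t)+u(\nu_2 t)}
\]
is Schur-convex, since then $(\lambda_1,\lambda_2)\preceq^{m}(\theta_1,\theta_2)$ forces $\Psi_{t}(\lambda)\le \Psi_{t}(\theta)$, which is exactly the required inequality.

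To prove Schur-convexity of $\Psi_t$ I would apply the Schur--Ostrowski criterion, i.e.\ verify $(\nu_1-\nu_2)(\partial_{\nu_1}\Psi_t-\partial_{\nu_2}\Psi_t)\ge 0$. After differentiating and substituting $x=\nu_1 t$, $y=\nu_2 t$ (a positive scaling, which preserves majorization), and using the identity $x u'(x)=\alpha\left(u(x)-P(x)\right)$ with $P(x):=u(x)v(x)$ and $v$ as in~(\ref{eclemV}) (this comes from logarithmic differentiation of $u$, which gives $xu'(x)/u(x)=\alpha(1-v(x))$), the criterion reduces, after clearing the positive factor and an overall sign, to showing that for $x>y$
\[
C:=[u(x)+u(y)][P'(x)-P'(y)]-[P(x)+P(y)][u'(x)-u'(y)]\le 0.
\]

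The key step is to sign $C$. Expanding with $P'=u'v+uv'$ factors $C$ neatly as
\[
C=[u(x)+u(y)]\,[u(x)v'(x)-u(y)v'(y)]+[v(x)-v(y)]\,[u'(x)u(y)+u(x)u'(y)].
\]
The second summand is nonpositive for $x>y$ by elementary means: $v$ is increasing (Lemma~\ref{lemV}), so $v(x)-v(y)\ge 0$, while $u\ge 0$ and $u'\le 0$ (Lemma~\ref{lemU}) make $u'(x)u(y)+u(x)u'(y)\le 0$. The first summand is exactly where Lemma~\ref{lemW} enters: a short computation identifies $u(x)v'(x)=-w(x)$, so the monotonicity of $w$ for $0<\alpha\le 1$ means that $u(x)v'(x)$ is decreasing, whence $u(x)v'(x)-u(y)v'(y)\le 0$ for $x>y$ and the first summand is nonpositive as well. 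Therefore $C\le 0$, which yields the Schur-convexity and hence the theorem.

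The hard part is precisely this first summand: its sign cannot be read off from monotonicity or convexity of $u$ and $v$ alone, and settling it requires understanding the monotonicity of the product $u\,v'$. This is what motivates the technical Lemmas~\ref{lem02} and~\ref{lemW}, and recognizing the clean identity $u\,v'=-w$ is the crux of the whole proof; it is also where the hypothesis $0<\alpha\le 1$ is genuinely used (through the convexity of $u$ and through Lemma~\ref{lemW}). The only routine care needed is in the bookkeeping of the positive factor and the overall sign when passing from the Schur--Ostrowski expression to $C$.
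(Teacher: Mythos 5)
Your proposal is correct and follows essentially the same route as the paper: both reduce the monotonicity of the ratio to Schur-convexity (via the Schur--Ostrowski criterion) of the fixed-$t$ function $\sum u\,s/\sum u$ (your $\Psi_t$ is just $\tfrac{\alpha}{t}$ times the paper's $L$, and your $P=uv$, $uv'=-w$ are the sign-flipped versions of the paper's $s=1-v$, $us'=w$), and both close the argument with exactly the same three ingredients, namely $u$ decreasing, $v$ increasing, and $w$ increasing for $0<\alpha\le 1$ (Lemma~\ref{lemW}). The algebraic factorization of your quantity $C$ checks out and matches the paper's decomposition term by term.
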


\begin{proof}
From (\ref{ecrh}) we have%
\begin{equation*}
r_{2:2}^{\theta }(t)=\frac{\alpha }{t}\left( \frac{\left( \theta
_{1}t\right) ^{\alpha }}{e^{(\theta _{1}t)^{\alpha }}-1}+\frac{\left( \theta
_{2}t\right) ^{\alpha }}{e^{(\theta _{2}t)^{\alpha }}-1}\right) ,
\end{equation*}%
then%
\begin{equation*}
\phi (t)=\frac{r_{2:2}^{\theta }(t)}{r_{2:2}^{\lambda }(t)}=\frac{u(\theta
_{1}t)+u(\theta _{2}t)}{u(\lambda _{1}t)+u(\lambda _{2}t)},
\end{equation*}%
where $u(t)$ is the function defined in (\ref{eclemU}). Note that the
derivative of $u(t)$ with respect to $t$ is%
\begin{equation*}
u^{\prime }(t)=\frac{\alpha t^{\alpha -1}\left( e^{t^{\alpha }}-1-t^{\alpha
}e^{t^{\alpha }}\right) }{\left( e^{t^{\alpha }}-1\right) ^{2}}=\frac{\alpha
}{t}u(t)s(t),
\end{equation*}%
where%
\begin{eqnarray*}
s(t) &=&\frac{-1+e^{t^{\alpha }}(1-t^{\alpha })}{e^{t^{\alpha }}-1}=1-\frac{%
t^{\alpha }e^{t^{\alpha }}}{e^{t^{\alpha }}-1} \\
&=&1-\frac{t^{\alpha }}{1-e^{-t^{\alpha }}}=1-v(t),
\end{eqnarray*}%
with $v(t)$ defined as in (\ref{eclemV}). Therefore, from Lemma \ref{eclemV}%
, we know that$\ s(t)$ is a decreasing function. We have to show that $\phi
^{\prime }(t)\geq 0$ for all $t\geq 0$. The derivative of $\phi (t)$ is, for
$t\geq 0$,
\begin{eqnarray*}
\phi ^{\prime }(t)&\overset{\text{sign}}{=}&\left( u(\theta _{1}t)s(\theta
_{1}t)+u(\theta _{2}t)s(\theta _{2}t)\right) \left( u(\lambda
_{1}t)+u(\lambda _{2}t)\right)  \\
&&-\left( u(\theta _{1}t)+u(\theta _{2}t)\right) \left( u(\lambda
_{1}t)s(\lambda _{1}t)+u(\lambda _{2}t)s(\lambda _{2}t)\right) .
\end{eqnarray*}%
Thus, we have to prove that the function%
\begin{equation*}
L\left( \theta _{1},\theta _{2}\right) =\frac{pu(\theta _{1}t)s(\theta
_{1}t)+qu(\theta _{2}t)s(\theta _{2}t)}{pu(\theta _{1}t)+qu(\theta _{2}t)}
\end{equation*}%
is Schur-convex in $\left( \theta _{1},\theta _{2}\right) $. On
differentiating $L\left( \theta _{1},\theta _{2}\right) $ with respect to $%
\theta _{1}$, we get%
\begin{eqnarray}
\frac{dL\left( \theta _{1},\theta _{2}\right) }{d\theta _{1}}&\overset{%
\text{sign}}{=}&\left[ u^{\prime }(\theta _{1}t)s(\theta _{1}t)+u(\theta
_{1}t)s^{\prime }(\theta _{1}t)\right] \left[ u(\theta _{1}t)+u(\theta _{2}t)%
\right]   \label{ec05} \\
&&-\left[ u(\theta _{1}t)s(\theta _{1}t)+u(\theta _{2}t)s(\theta _{2}t)%
\right] u^{\prime }(\theta _{1}t)  \notag \\
&=&u(\theta _{2}t)u^{\prime }(\theta _{1}t)\left[ s(\theta _{1}t)-s(\theta
_{2}t)\right] +u(\theta _{1}t)s^{\prime }(\theta _{1}t)\left[ u(\theta
_{1}t)+u(\theta _{2}t)\right] .  \notag
\end{eqnarray}%
Note that $u(t)s^{\prime }(t)=w(t)$ which is defined in (\ref{eclem}) and
from Lemma \ref{lem02}, we know that $w(t)$ is an increasing function for $%
0<\alpha \leq 1$. Then, (\ref{ec05}) can be rewritten as
\begin{equation*}
\frac{dL\left( \theta _{1},\theta _{2}\right) }{d\theta _{1}}\overset{\text{%
sign}}{=} u(\theta _{2}t)u^{\prime }(\theta _{1}t)\left[ s(\theta
_{1}t)-s(\theta _{2}t)\right] +w(\theta _{1}t)\left[ u(\theta
_{1}t)+u(\theta _{2}t)\right] .
\end{equation*}%
By interchanging $\theta _{1}$ and $\theta _{2}$, we have%
\begin{equation*}
\frac{dL\left( \theta _{1},\theta _{2}\right) }{d\theta _{2}}\overset{\text{%
sign}}{=}u(\theta _{1}t)u^{\prime }(\theta _{2}t)\left[ s(\theta
_{2}t)-s(\theta _{1}t)\right] +w(\theta _{2}t)\left[ u(\theta
_{1}t)+u(\theta _{2}t)\right] .
\end{equation*}%
Thus,%
\begin{eqnarray*}
\frac{dL\left( \theta _{1},\theta _{2}\right) }{d\theta _{1}}-\frac{%
dL\left( \theta _{1},\theta _{2}\right) }{d\theta _{2}}&\overset{\text{sign}}{%
=}& \left[ s(\theta _{1}t)-s(\theta _{2}t)\right] \left[ u(\theta
_{2}t)u^{\prime }(\theta _{1}t)+u(\theta _{1}t)u^{\prime }(\theta _{2}t)%
\right] + \\
&&\left[ u(\theta _{1}t)+u(\theta _{2}t)\right] \left[ w(\theta
_{1}t)-w(\theta _{2}t)\right]  \\
&\leq &0,
\end{eqnarray*}%
if $\theta _{1}\leq \theta _{2}$, since $s(t)$ is decreasing, $u^{\prime
}(t)\leq 0$ because $u(t)$ is a decreasing function and $w(t)$ is an
increasing function. Hence,%
\begin{equation*}
\left( \theta _{1}-\theta _{2}\right) \left( \frac{dL\left( \theta
_{1},\theta _{2}\right) }{d\theta _{1}}-\frac{dL\left( \theta _{1},\theta
_{2}\right) }{d\theta _{2}}\right) \geq 0.
\end{equation*}
\end{proof}

%%%%%%%%%%%%%%%%%%%%%%%%%%%%%%%%%%%%%%%%%%%%%%%%%%%%%%%%%%%%%%%%%%%%%%%%%%%%%%%%%%%%%%%%

In the next result, we extend Theorem \ref{th07} from reverse hazard rate
ordering to likelihood ratio ordering for $n=2$.

\begin{theorem}
\label{th09} Let $X_{\lambda _{1}},X_{\lambda _{2}}$ be independent random
variables with $X_{\lambda _{i}}\sim W(\alpha ,\lambda _{i})$ where $\lambda
_{i}>0$, $i=1,2$, and let $X_{\theta _{1}},X_{\theta _{2}}$ be independent
random variables with $X_{\theta _{i}}\sim W(\alpha ,\theta _{i})$ where $%
\theta _{i}>0$, $i=1,2$. Then for $0<\alpha \leq 1$,
\begin{equation*}
\left( \lambda _{1},\lambda _{2}\right) \preceq ^{m}\left( \theta
_{1},\theta _{2}\right) \Rightarrow X_{2:2}^{\lambda }\leq
_{lr}X_{2:2}^{\theta }.
\end{equation*}
\end{theorem}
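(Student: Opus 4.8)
The plan is to deduce the likelihood ratio ordering from the two results already in hand by appealing to the criterion stated in the excerpt, namely part (b) of Theorem 1.C.4 of Shaked and Shanthikumar: if $X\leq_{rh}Y$ and the ratio $r_Y(t)/r_X(t)$ is increasing in $t$, then $X\leq_{lr}Y$. Here I would set $X=X_{2:2}^{\lambda}$ and $Y=X_{2:2}^{\theta}$, so that $r_X=r_{2:2}^{\lambda}$ and $r_Y=r_{2:2}^{\theta}$, and then verify the two hypotheses of that criterion in turn.

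First I would establish the reverse hazard rate ordering $X_{2:2}^{\lambda}\leq_{rh}X_{2:2}^{\theta}$. Under the hypothesis $(\lambda_1,\lambda_2)\preceq^{m}(\theta_1,\theta_2)$ with $0<\alpha\leq 1$, this is an immediate consequence of Corollary \ref{cor1} applied with $n=2$, since that corollary asserts exactly that majorization of the scale vectors implies $\leq_{rh}$ between the corresponding parallel systems. So the first hypothesis of the criterion holds without any further work.

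Second, I would verify that $r_{2:2}^{\theta}(t)/r_{2:2}^{\lambda}(t)$ is increasing in $t$. But this is precisely the conclusion of Theorem \ref{th08}, which under the same majorization hypothesis $(\lambda_1,\lambda_2)\preceq^{m}(\theta_1,\theta_2)$ and the same restriction $0<\alpha\leq 1$ shows that this ratio of reverse hazard rates is monotone increasing. Thus the second hypothesis of the criterion is also met.

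With both hypotheses verified, part (b) of Theorem 1.C.4 delivers $X_{2:2}^{\lambda}\leq_{lr}X_{2:2}^{\theta}$, which is the desired conclusion. I do not expect any genuine obstacle in this argument: the real analytic difficulty — proving the monotonicity of the ratio of reverse hazard rates, which rests on the convexity and monotonicity properties of $u$, $v$, and $w$ from Lemmas \ref{lemU}--\ref{lemW} — has already been absorbed into Theorem \ref{th08}. The present statement is therefore essentially an assembly of Corollary \ref{cor1} and Theorem \ref{th08} through the standard $rh$-to-$lr$ upgrade criterion, and the proof is short.
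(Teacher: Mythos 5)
Your proposal is correct and follows essentially the same route as the paper, which also deduces the result by combining Theorem \ref{th08} with the reverse hazard rate ordering (the paper cites Theorem \ref{th07} directly, you cite its Corollary \ref{cor1}, which is the same fact specialized to majorization) via part (b) of Theorem 1.C.4 of Shaked and Shanthikumar. There is no substantive difference in the argument.
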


\begin{proof}
The required result follows from Theorem 1.C.4 in \cite{Shaked:2007} and
Theorems \ref{th07} and \ref{th08}.
\end{proof}

Note that Theorem \ref{th09} generalizes and strengthens Theorem 3.1 of \cite%
{Dykstra:1997} from exponential to Weibull distributions.

%%%%%%%%%%%%%%%%%%%%%%%%%%%%%%%%%%%%%%%%%%%%%%%%%%%%%%%%%%%%%%%%%%%%%%%%%%%%%%%%%%%%%%
One may wonder whether one can extend Theorem \ref{th09}\ for $\alpha >1$.
The following example gives a negative answer.

\begin{example}
Let $\left( X_{\lambda _{1}},X_{\lambda _{2}}\right) $ be a vector of
heterogeneous Weibull random variables, $W(\alpha ,\lambda _{i})$, with $%
\alpha =2$ and scale parameter vector $\left( 1.5,2\right) $. Let $\left(
X_{\theta _{1}},X_{\theta _{2}}\right) $ be a vector of heterogeneous
Weibull random variables, $W(\alpha ,\theta _{i})$, with $\alpha =2$ and
scale parameter vector $\left( 1,2.5\right) $. Obviously $\left( \lambda
_{1},\lambda _{2}\right) \preceq ^{m}\left( \theta _{1},\theta _{2}\right) $%
, however $X_{2:2}^{\lambda }\nleq _{lr}X_{2:2}^{\theta }$ since $%
f_{2:2}^{\theta }(t)/f_{2:2}^{\lambda }(t)$ is not increasing in $t$ as it
can be seen from Figure \ref{figcocpdfWeib2}.
\end{example}

\begin{figure}[!th]
\centering
\includegraphics[scale=0.7, keepaspectratio]{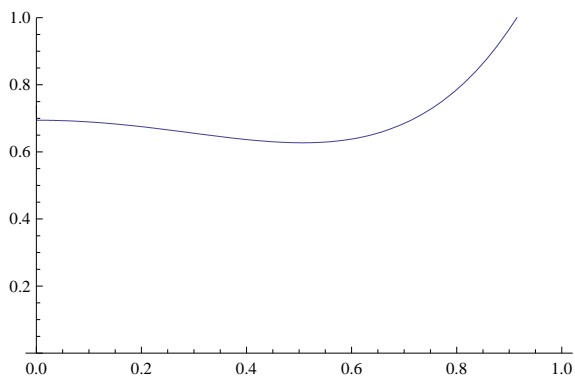}
\caption{Plot of the $f_{n:n}^{\protect\theta }(t)/f_{n:n}^{\protect\lambda %
}(t)$ when $\protect\alpha =2$, $\protect\theta =\left( 1,2.5\right) $ and $%
\protect\lambda =\left( 1.5,2\right) $ for random variables with Weibull
distributions}
\label{figcocpdfWeib2}
\end{figure}
%%
%%
%%

%%%%%%%%%%%%%%%%%%%%%%%%%%%%%%%%%%%%%%%%%%%%%%%%%%%%%%%%%%%%%%%%%%%%%%%%%%%%%%%%%%%%%%%%

For comparing the lifetimes of two parallel systems with independent Weibull
components, $W(\alpha ,\lambda _{i})$, we have proved in Theorem \ref{th09}
that they are ordered according to likelihood ratio ordering under the
condition of majorization order with respect to $(\lambda _{1},\lambda _{2})$
when $0<\alpha \leq 1$. For $n >2$, the problem is still open. However, in
the multiple-outlier Weibull model, a similar result still holds. But first,
we need to prove the following result.

\begin{theorem}
\label{th15} Let $X_{{1}},\ldots ,X_{{n}}$ be independent
random variables following the multiple-outlier Weibull model such that $%
X_{{i}}\sim W(\alpha ,\lambda _{1})$ for $i=1,\ldots ,p$\ and $%
X_{{j}}\sim W(\alpha ,\lambda _{2})$ for $j=p+1,\ldots ,n$, with $%
\lambda _{1},\lambda _{2}>0$. Let $Y_{{1}},\ldots ,Y_{{n}}$
be another set of independent random variables following the
multiple-outlier Weibull model such that $Y_{{i}}\sim W(\alpha
,\theta _{1})$ for $i=1,\ldots ,p$\ and $Y_{{j}}\sim W(\alpha
,\theta _{2})$ for $j=p+1,\ldots ,n$, with $\theta _{1},\theta _{2}>0$. Then
for $0<\alpha \leq 1$,
\begin{equation*}
(\underbrace{\lambda _{1},\ldots ,\lambda _{1}}_{p},\underbrace{\lambda
_{2},\ldots ,\lambda _{2}}_{q})
\preceq ^{m}
(\underbrace{\theta _{1},\ldots ,\theta _{1}}_{p},\underbrace{\theta
_{2},\ldots ,\theta _{2}}_{q})
\Rightarrow \frac{r_{n:n}^{\theta }(t)}{%
r_{n:n}^{\lambda }(t)}\text{ is increasing in }t,
\end{equation*}
where $p+q=n$.
\end{theorem}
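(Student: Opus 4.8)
The plan is to run the proof of Theorem~\ref{th08} with the two-point function replaced by its $(p,q)$-weighted analogue. Because $r_{n:n}$ is additive over components, the multiple-outlier model gives
\begin{equation*}
r_{n:n}^{\lambda }(t)=\frac{\alpha }{t}\bigl(p\,u(\lambda _{1}t)+q\,u(\lambda _{2}t)\bigr),\qquad
r_{n:n}^{\theta }(t)=\frac{\alpha }{t}\bigl(p\,u(\theta _{1}t)+q\,u(\theta _{2}t)\bigr),
\end{equation*}
with $u$ as in (\ref{eclemU}). Writing $\phi (t)=r_{n:n}^{\theta }(t)/r_{n:n}^{\lambda }(t)$ and using $u^{\prime }(t)=\tfrac{\alpha }{t}u(t)s(t)$, where $s(t)=1-v(t)$ with $v$ as in (\ref{eclemV}), the same logarithmic-derivative manipulation as in Theorem~\ref{th08} yields
\begin{equation*}
\phi ^{\prime }(t)\overset{\text{sign}}{=}L(\theta _{1},\theta _{2})-L(\lambda _{1},\lambda _{2}),\qquad
L(x_{1},x_{2})=\frac{p\,u(x_{1}t)s(x_{1}t)+q\,u(x_{2}t)s(x_{2}t)}{p\,u(x_{1}t)+q\,u(x_{2}t)} .
\end{equation*}
So everything reduces to the inequality $L(\theta _{1},\theta _{2})\geq L(\lambda _{1},\lambda _{2})$, which is precisely the weighted version of the Schur-convexity statement proved (for $p=q=1$) in Theorem~\ref{th08}.

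Next I would convert the block majorization hypothesis into geometry. Assuming, as we may, $\lambda _{1}\leq \lambda _{2}$, a comparison of partial sums of the increasing arrangements shows that
\begin{equation*}
(\underbrace{\lambda _{1},\dots ,\lambda _{1}}_{p},\underbrace{\lambda _{2},\dots ,\lambda _{2}}_{q})\preceq ^{m}(\underbrace{\theta _{1},\dots ,\theta _{1}}_{p},\underbrace{\theta _{2},\dots ,\theta _{2}}_{q})
\end{equation*}
is equivalent to the nesting $\theta _{1}\leq \lambda _{1}\leq \lambda _{2}\leq \theta _{2}$ together with the weighted-mean constraint $p\lambda _{1}+q\lambda _{2}=p\theta _{1}+q\theta _{2}$: the conditions for $j\leq p$ give $\lambda _{1}\geq \theta _{1}$, the equal-sum condition then forces $\theta _{2}\geq \lambda _{2}$, and for $p<j<n$ the difference of partial sums equals $(n-j)(\theta _{2}-\lambda _{2})\geq 0$ automatically.

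The crux is that, since $p\neq q$ in general, $L$ is \emph{not} symmetric, so the Schur--Ostrowski swap used in Theorem~\ref{th08} must be replaced by a directional derivative along the weight-preserving spreading path. I would set $\theta _{1}(r)=\lambda _{1}-r/p$ and $\theta _{2}(r)=\lambda _{2}+r/q$, which keeps $p\theta _{1}(r)+q\theta _{2}(r)$ constant and reaches $(\theta _{1},\theta _{2})$ at $r^{\ast }=p(\lambda _{1}-\theta _{1})\geq 0$, and put $\widetilde{L}(r)=L(\theta _{1}(r),\theta _{2}(r))$. Differentiating and using $u(t)s^{\prime }(t)=w(t)$ gives, up to a positive factor,
\begin{equation*}
\frac{d}{dr}\widetilde{L}(r)\overset{\text{sign}}{=}\bigl(s(\theta _{2}t)-s(\theta _{1}t)\bigr)\bigl(q\,u(\theta _{2}t)u^{\prime }(\theta _{1}t)+p\,u(\theta _{1}t)u^{\prime }(\theta _{2}t)\bigr)+\bigl(p\,u(\theta _{1}t)+q\,u(\theta _{2}t)\bigr)\bigl(w(\theta _{2}t)-w(\theta _{1}t)\bigr).
\end{equation*}
Because $\theta _{1}(r)\leq \theta _{2}(r)$ throughout, each factor has a definite sign: $s$ is decreasing (Lemma~\ref{lemV}) and $u$ is decreasing (Lemma~\ref{lemU}), so the first product is a product of two non-positive factors and hence non-negative; and $w$ is increasing for $0<\alpha \leq 1$ (Lemma~\ref{lemW}), so the second term is non-negative. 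Thus $\widetilde{L}$ increases from $\widetilde{L}(0)=L(\lambda _{1},\lambda _{2})$ to $\widetilde{L}(r^{\ast })=L(\theta _{1},\theta _{2})$, which gives $\phi ^{\prime }(t)\geq 0$.

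The main obstacle is exactly this weighting. The reduction $\phi ^{\prime }\overset{\text{sign}}{=}L(\theta )-L(\lambda )$ is routine, but the unequal multiplicities $p$ and $q$ destroy the symmetry that made Theorem~\ref{th08} a clean Schur-convexity argument, so the monotonicity of $L$ has to be extracted from the directional derivative above rather than from the Schur--Ostrowski criterion. Carrying out and signing that directional derivative---where Lemmas~\ref{lemU}, \ref{lemV} and \ref{lemW} are all invoked simultaneously---is the heart of the proof.
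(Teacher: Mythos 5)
Your proposal follows the same route as the paper's own proof: the reduction $\phi'(t)\overset{\text{sign}}{=}L(\theta_1,\theta_2)-L(\lambda_1,\lambda_2)$, the differentiation of $L$ using $u'(t)=\tfrac{\alpha}{t}u(t)s(t)$ and $u(t)s'(t)=w(t)$, and the signing of the two resulting terms by Lemmas \ref{lemU}, \ref{lemV} (through $s=1-v$) and \ref{lemW} are identical to the paper's computation. Where you deviate is in how the hypothesis is fed in, and there your instinct is sound: the paper asserts that it suffices to prove ``Schur-convexity'' of $L$ and then checks the Ostrowski-type inequality $(\theta_1-\theta_2)\bigl(\partial L/\partial\theta_1-\partial L/\partial\theta_2\bigr)\geq 0$, which is not a legitimate packaging when $p\neq q$ ($L$ is not symmetric, and block majorization fixes $p\theta_1+q\theta_2$, not $\theta_1+\theta_2$). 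Your directional derivative along the weight-preserving path is the correct repair, and it is in fact what the paper's algebra computes: because the paper strips the distinct positive factors $pt/D^2$ and $qt/D^2$ from the two partials before subtracting, its displayed ``difference'' coincides term by term with the negative of your $\widetilde{L}'(r)$. On this point your write-up is more careful than the published one.

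There is, however, a genuine gap in your conversion of the hypothesis into geometry: the claimed equivalence of block majorization with the nesting $\theta_1\leq\lambda_1\leq\lambda_2\leq\theta_2$ plus $p\lambda_1+q\lambda_2=p\theta_1+q\theta_2$ is false when $p\neq q$; only the implication from nesting to majorization holds. After normalizing $\lambda_1\leq\lambda_2$ --- and that is the only normalization available, since swapping $\theta_1$ and $\theta_2$ alone changes the multiset when $p\neq q$ --- the hypothesis also admits crossed configurations $\theta_2\leq\lambda_1\leq\lambda_2\leq\theta_1$. Concretely, take $p=1$, $q=2$, $(\lambda_1,\lambda_2)=(1,2)$, $(\theta_1,\theta_2)=(3,1)$: the block vectors are $(1,2,2)$ and $(3,1,1)$, and $(1,2,2)\preceq^{m}(3,1,1)$, yet $\theta_1>\lambda_1$. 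In such a case your $r^{\ast}=p(\lambda_1-\theta_1)$ is negative, and traversing the sum-preserving line from $(\lambda_1,\lambda_2)$ to $(\theta_1,\theta_2)$ forces a crossing of the diagonal, on the two sides of which your $\widetilde{L}'$ has opposite signs ($\widetilde{L}$ is V-shaped along that line); monotonicity therefore gives no comparison of the endpoints. So your argument proves the theorem only in the nested case. To be fair, the paper's proof has exactly the same limitation --- its Ostrowski-type inequality is precisely the nested-case statement --- so you have reproduced (indeed cleaned up) the published argument; but a complete proof of the statement as literally written would still need a separate argument for the crossed case, which neither you nor the paper supplies.
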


\begin{proof}
From (\ref{ecrh}) we have%
\begin{equation*}
r_{n:n}^{\theta }(t)=\frac{\alpha }{t}\left( p\frac{\left( \theta
_{1}t\right) ^{\alpha }}{e^{(\theta _{1}t)^{\alpha }}-1}+q\frac{\left(
\theta _{2}t\right) ^{\alpha }}{e^{(\theta _{2}t)^{\alpha }}-1}\right) ,
\end{equation*}%
where $p+q=n$, then%
\begin{equation*}
\phi (t)=\frac{r_{n:n}^{\theta }(t)}{r_{n:n}^{\lambda }(t)}=\frac{pu(\theta
_{1}t)+qu(\theta _{2}t)}{pu(\lambda _{1}t)+qu(\lambda _{2}t)},
\end{equation*}%
where $u(t)$ is the function defined in (\ref{eclemU}). We have to show that
$\phi ^{\prime }(t)\geq 0$ for all $t\geq 0$. The derivative of $\phi (t)$
is, for $t\geq 0$,
\begin{eqnarray*}
\phi ^{\prime }(t)&\overset{\text{sign}}{=}&\left( pu(\theta _{1}t)s(\theta
_{1}t)+qu(\theta _{2}t)s(\theta _{2}t)\right) \left( pu(\lambda
_{1}t)+qu(\lambda _{2}t)\right)  \\
&&-\left( pu(\theta _{1}t)+qu(\theta _{2}t)\right) \left( pu(\lambda
_{1}t)s(\lambda _{1}t)+qu(\lambda _{2}t)s(\lambda _{2}t)\right) ,
\end{eqnarray*}%
where $s(t)=1-v(t)$ and $v(t)$ is defined in (\ref{eclemV}). Thus, we have
to prove that the function%
\begin{equation*}
L\left( \theta _{1},\theta _{2}\right) =\frac{pu(\theta _{1}t)s(\theta
_{1}t)+qu(\theta _{2}t)s(\theta _{2}t)}{pu(\theta _{1}t)+qu(\theta _{2}t)}
\end{equation*}%
is Schur-convex in $\left( \theta _{1},\theta _{2}\right) $. On
differentiating $L\left( \theta _{1},\theta _{2}\right) $ with respect to $%
\theta _{1}$, we get%
\begin{eqnarray}
\frac{dL\left( \theta _{1},\theta _{2}\right) }{d\theta _{1}}&\overset{%
\text{sign}}{=}&\left[ u^{\prime }(\theta _{1}t)s(\theta _{1}t)+u(\theta
_{1}t)s^{\prime }(\theta _{1}t)\right] \left[ pu(\theta _{1}t)+qu(\theta
_{2}t)\right]   \label{ec04} \\
&&-\left[ pu(\theta _{1}t)s(\theta _{1}t)+qu(\theta _{2}t)s(\theta _{2}t)%
\right] u^{\prime }(\theta _{1}t)  \notag \\
&=&qu(\theta _{2}t)u^{\prime }(\theta _{1}t)\left[ s(\theta _{1}t)-s(\theta
_{2}t)\right] +u(\theta _{1}t)s^{\prime }(\theta _{1}t)\left[ pu(\theta
_{1}t)+qu(\theta _{2}t)\right] .  \notag
\end{eqnarray}%
Note that $u(t)s^{\prime }(t)=w(t)$ which is defined in (\ref{eclem}) and
from Lemma \ref{lem02}, we know that $w(t)$ is an increasing function for $%
0<\alpha \leq 1$. Then, (\ref{ec04}) can be rewritten as
\begin{equation*}
\frac{dL\left( \theta _{1},\theta _{2}\right) }{d\theta _{1}}\overset{\text{%
sign}}{=}qu(\theta _{2}t)u^{\prime }(\theta _{1}t)\left[ s(\theta
_{1}t)-s(\theta _{2}t)\right] +w(\theta _{1}t)\left[ pu(\theta
_{1}t)+qu(\theta _{2}t)\right] .
\end{equation*}%
By interchanging $\theta _{1}$ and $\theta _{2}$, we have%
\begin{equation*}
\frac{dL\left( \theta _{1},\theta _{2}\right) }{d\theta _{2}}\overset{\text{%
sign}}{=}pu(\theta _{1}t)u^{\prime }(\theta _{2}t)\left[ s(\theta
_{2}t)-s(\theta _{1}t)\right] +w(\theta _{2}t)\left[ pu(\theta
_{1}t)+qu(\theta _{2}t)\right] .
\end{equation*}%
Thus,%
\begin{eqnarray*}
\frac{dL\left( \theta _{1},\theta _{2}\right) }{d\theta _{1}}-\frac{%
dL\left( \theta _{1},\theta _{2}\right) }{d\theta _{2}}&\overset{\text{sign}}{%
=}&\left[ s(\theta _{1}t)-s(\theta _{2}t)\right] \left[ qu(\theta
_{2}t)u^{\prime }(\theta _{1}t)+pu(\theta _{1}t)u^{\prime }(\theta _{2}t)%
\right] + \\
&&\left[ pu(\theta _{1}t)+qu(\theta _{2}t)\right] \left[ w(\theta
_{1}t)-w(\theta _{2}t)\right]  \\
&\leq &0,
\end{eqnarray*}%
if $\theta _{1}\leq \theta _{2}$, since $s(t)$ is decreasing, $u^{\prime
}(t)\leq 0$ because $u(t)$ is a decreasing function and $w(t)$ is an
increasing function. Hence,%
\begin{equation*}
\left( \theta _{1}-\theta _{2}\right) \left( \frac{dL\left( \theta
_{1},\theta _{2}\right) }{d\theta _{1}}-\frac{dL\left( \theta _{1},\theta
_{2}\right) }{d\theta _{2}}\right) \geq 0.
\end{equation*}
\end{proof}

%%%%%%%%%%%%%%%%%%%%%%%%%%%%%%%%%%%%%%%%%%%%%%%%%%%%%%%%%%%%%%%%%%%%%%%%%%%%%%%%%%%%%%%%
Using Theorems \ref{th07} and \ref{th15} and Theorem 1.C.4 in \cite%
{Shaked:2007}, we have the following result.

\begin{theorem}
\label{th16} Let $X_{{1}},\ldots ,X_{{n}}$ be independent
random variables following the multiple-outlier Weibull model such that $%
X_{{i}}\sim W(\alpha ,\lambda _{1})$ for $i=1,\ldots ,p$\ and $%
X_{{j}}\sim W(\alpha ,\lambda _{2})$ for $j=p+1,\ldots ,n$, with $%
\lambda _{1},\lambda _{2}>0$. Let $Y_{{1}},\ldots ,Y_{{n}}$
be another set of independent random variables following the
multiple-outlier Weibull model such that $Y_{{i}}\sim W(\alpha
,\theta _{1})$ for $i=1,\ldots ,p$\ and $Y_{{j}}\sim W(\alpha
,\theta _{2})$ for $j=p+1,\ldots ,n$, with $\theta _{1},\theta _{2}>0$. Then
for $0<\alpha \leq 1$,
\begin{equation*}
(\underbrace{\lambda _{1},\ldots ,\lambda _{1}}_{p},\underbrace{\lambda
_{2},\ldots ,\lambda _{2}}_{q})
\preceq ^{m}
(\underbrace{\theta _{1},\ldots ,\theta _{1}}_{p},\underbrace{\theta
_{2},\ldots ,\theta _{2}}_{q})
\Rightarrow X_{n:n}^{\lambda }\leq
_{lr}X_{n:n}^{\theta },
\end{equation*}
where $p+q=n$.
\end{theorem}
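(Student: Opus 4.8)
The plan is to derive the likelihood ratio ordering as a direct consequence of part (b) of Theorem 1.C.4 of \cite{Shaked:2007}, exactly as was done for the two-component case in Theorem \ref{th09}. That criterion states that if $X\leq_{rh}Y$ and the ratio $r_Y(t)/r_X(t)$ of reverse hazard rates is increasing in $t$, then $X\leq_{lr}Y$. Taking $X=X_{n:n}^{\lambda}$ and $Y=X_{n:n}^{\theta}$, the two hypotheses needed are precisely the conclusions of the two theorems proved earlier, so the proof amounts to verifying that those ingredients are available and correctly oriented.

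First I would establish the reverse hazard rate ordering $X_{n:n}^{\lambda}\leq_{rh}X_{n:n}^{\theta}$. The hypothesis of the present theorem is a majorization $\preceq^{m}$ between the two multiple-outlier scale vectors, and since ${\bm x}\leq^{m}{\bm y}\Rightarrow{\bm x}\leq^{w}{\bm y}$, weak majorization also holds. Theorem \ref{th07} then applies verbatim (its hypothesis is weak majorization for $0<\alpha\leq 1$) and yields $X_{n:n}^{\lambda}\leq_{rh}X_{n:n}^{\theta}$. Second, I would invoke Theorem \ref{th15}, whose conclusion is exactly that $r_{n:n}^{\theta}(t)/r_{n:n}^{\lambda}(t)$ is increasing in $t$ under the same majorization hypothesis and the same restriction $0<\alpha\leq 1$.

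With both ingredients in hand, the final step is to feed them into Theorem 1.C.4(b): the reverse hazard rate ordering supplies the first hypothesis and the monotone ratio supplies the second, giving $X_{n:n}^{\lambda}\leq_{lr}X_{n:n}^{\theta}$. The main analytic obstacle---proving Schur-convexity of the auxiliary function $L(\theta_{1},\theta_{2})$, which in turn rests on the monotonicity and convexity facts collected in Lemmas \ref{lemU}--\ref{lemW}---has already been overcome inside Theorem \ref{th15}; for the present statement there is no new difficulty beyond assembling the pieces. The only point requiring care is bookkeeping on orientation: one must check that the reverse hazard rate order runs $X_{n:n}^{\lambda}\leq_{rh}X_{n:n}^{\theta}$ (equivalently $r_{n:n}^{\lambda}\leq r_{n:n}^{\theta}$ pointwise) and that it is the ratio with $r_{n:n}^{\theta}$ in the numerator that is increasing, so that the labels $X,Y$ in the Shaked--Shanthikumar criterion match $X_{n:n}^{\lambda},X_{n:n}^{\theta}$ respectively.
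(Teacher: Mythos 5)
Your proposal is correct and matches the paper's own argument exactly: the paper derives Theorem \ref{th16} precisely by combining Theorem \ref{th07} (applied via the implication from majorization to weak majorization) with Theorem \ref{th15} and part (b) of Theorem 1.C.4 of \cite{Shaked:2007}. Your additional care about the orientation of the order and the ratio is sound and consistent with how the paper uses these ingredients.
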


%%%%%%%%%%%%%%%%%%%%%%%%%%%%%%%%%%%%%%%%%%%%%%%%%%%%%%%%%%%%%%%%%%%%%%%%%%%%%%%%%%%%%%%%

The above theorem says that the lifetime of a parallel system consisting of
two types of Weibull components with a common shape parameter between 0 and
1 is stochastically larger according to likelihood ratio ordering when the
scale parameters are more dispersed according to majorization.

In the following results, we investigate whether likelihood ratio ordering
holds among parallel systems when the scale parameters of the Weibull
distributions are ordered according to weak majorization order and the
common shape parameter $\alpha$ is arbitrary.

\begin{theorem}
\label{th10}Let $X_{\lambda _{1}},X_{\lambda }$ be independent random
variables with $X_{\lambda _{1}}\sim W(\alpha ,\lambda _{1})$ and $%
X_{\lambda }\sim W(\alpha ,\lambda )$ where $\lambda _{1},\lambda >0$. Let $%
Y_{\lambda _{1}^{\ast }},Y_{\lambda }$ be independent random variables with $%
Y_{\lambda _{1}^{\ast }}\sim W(\alpha ,\lambda _{1}^{\ast })$ and $%
Y_{\lambda }\sim W(\alpha ,\lambda )$, where $\lambda _{1}^{\ast },\lambda
>0 $. Suppose $\lambda _{1}^{\ast }=\min (\lambda ,\lambda _{1},\lambda
_{1}^{\ast })$, then for any $\alpha >0$,
\begin{equation*}
\left( \lambda _{1},\lambda \right) \preceq ^{w}\left( \lambda _{1}^{\ast
},\lambda \right) \Rightarrow \frac{r_{2:2}^{\ast }(t)}{r_{2:2}(t)}\text{ is
increasing in }t.
\end{equation*}%
.
\end{theorem}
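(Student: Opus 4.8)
The plan is to follow the same reduction used in the proofs of Theorems \ref{th08} and \ref{th15}, but to argue directly on the sign of the derivative rather than through Schur-convexity, since here we are given only a weak majorization. Writing $u$ for the function in (\ref{eclemU}), the reverse hazard rates are $r_{2:2}(t)=\frac{\alpha}{t}\big(u(\lambda_1 t)+u(\lambda t)\big)$ and $r_{2:2}^{\ast}(t)=\frac{\alpha}{t}\big(u(\lambda_1^{\ast} t)+u(\lambda t)\big)$, so that
\[
\phi(t)=\frac{r_{2:2}^{\ast}(t)}{r_{2:2}(t)}=\frac{u(\lambda_1^{\ast} t)+u(\lambda t)}{u(\lambda_1 t)+u(\lambda t)}.
\]
Using $u'(t)=\frac{\alpha}{t}u(t)s(t)$ with $s=1-v$ (as derived in the proof of Theorem \ref{th08}), I would compute $\phi'(t)$ and reduce its sign, exactly as there, to
\[
\phi'(t)\overset{\text{sign}}{=}\big(u_1^{\ast}s_1^{\ast}+u_\lambda s_\lambda\big)\big(u_1+u_\lambda\big)-\big(u_1^{\ast}+u_\lambda\big)\big(u_1 s_1+u_\lambda s_\lambda\big),
\]
where I abbreviate $u_1^{\ast}=u(\lambda_1^{\ast}t)$, $u_1=u(\lambda_1 t)$, $u_\lambda=u(\lambda t)$ and likewise for $s$.

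Next I would record what the hypotheses give. For $n=2$ the condition $(\lambda_1,\lambda)\preceq^{w}(\lambda_1^{\ast},\lambda)$ forces $\lambda_1\ge\lambda_1^{\ast}$ (from the $j=2$ inequality), and combined with $\lambda_1^{\ast}=\min(\lambda,\lambda_1,\lambda_1^{\ast})$ this simply says $\lambda_1^{\ast}\le\min(\lambda_1,\lambda)$. Since $u$ is decreasing and $v$ is increasing for \emph{every} $\alpha>0$ (Lemmas \ref{lemU} and \ref{lemV}), $s=1-v$ is decreasing for every $\alpha>0$; this is the reason the restriction $0<\alpha\le1$ can be dropped here, as the convexity of $u$ and the monotonicity of $w$ are never needed. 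In particular $s_1^{\ast}\ge s_1$ and $s_1^{\ast}\ge s_\lambda$, while $u_1^{\ast}\ge u_1>0$ and all $u$-values are positive.

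The core of the argument is then a purely algebraic rearrangement. Expanding the displayed difference and cancelling the $u_\lambda^2 s_\lambda$ terms, I would write
\[
\phi'(t)\overset{\text{sign}}{=}u_1^{\ast}u_1\,(s_1^{\ast}-s_1)+u_\lambda\big[\,u_1^{\ast}(s_1^{\ast}-s_\lambda)-u_1(s_1-s_\lambda)\,\big].
\]
The first summand is nonnegative because $u_1^{\ast}u_1>0$ and $s_1^{\ast}\ge s_1$. The remaining bracket is the crux, and I expect it to be the main obstacle: the factor $s_1-s_\lambda$ is of indeterminate sign, since we do not control the order of $\lambda_1$ and $\lambda$ and we lack full majorization, so the Schur-convexity route of Theorems \ref{th08} and \ref{th15} is unavailable.

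I would resolve the bracket by the chain
\[
u_1^{\ast}(s_1^{\ast}-s_\lambda)\ \ge\ u_1^{\ast}\max(s_1-s_\lambda,0)\ \ge\ u_1\max(s_1-s_\lambda,0)\ \ge\ u_1(s_1-s_\lambda),
\]
where the first step uses $s_1^{\ast}-s_\lambda\ge\max(s_1-s_\lambda,0)$ (valid since $s_1^{\ast}\ge s_\lambda$ and $s_1^{\ast}\ge s_1$) together with $u_1^{\ast}>0$, and the second uses $u_1^{\ast}\ge u_1>0$. Hence the bracket is nonnegative, so $\phi'(t)\ge0$ for all $t\ge0$ and $\phi$ is increasing, which is the claim. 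The payoff of this rearrangement is that it quarantines the sign-indefinite term inside a single comparison that holds uniformly in the relative order of $\lambda_1$ and $\lambda$, thereby avoiding any case split on that order and any appeal to convexity.
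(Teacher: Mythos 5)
Your proof is correct and follows essentially the same route as the paper's: the same ratio $\phi(t)$, the same sign decomposition of $\phi'(t)$ into the three pairwise terms (your differences $s_i - s_j$ are exactly the paper's $v_j - v_i$, since $s = 1 - v$), and the same monotonicity inputs from Lemmas \ref{lemU} and \ref{lemV}, which is precisely why the restriction $0<\alpha\le 1$ is not needed. The only difference is bookkeeping: where the paper splits into the two cases $\lambda_1^{\ast}\le\lambda\le\lambda_1$ and $\lambda_1^{\ast}\le\lambda_1\le\lambda$ and in the second case bounds the sum by $2u(\lambda t)u(\lambda_1 t)\left(v(\lambda_1 t)-v(\lambda_1^{\ast}t)\right)$, your chain through $\max(s_1-s_\lambda,0)$ absorbs both orderings of $\lambda_1$ and $\lambda$ in one stroke --- a tidier packaging of the same case analysis, not a substantively different argument.
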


\begin{proof}
From (\ref{ecrh}), the reverse hazard rate function of $X_{2:2}$ is
\begin{equation*}
r_{2:2}(t)=\frac{\alpha }{t}\left( \frac{\left( \lambda _{1}t\right)
^{\alpha }}{e^{(\lambda _{1}t)^{\alpha }}-1}+\frac{\left( \lambda t\right)
^{\alpha }}{e^{(\lambda t)^{\alpha }}-1}\right) ,
\end{equation*}%
then%
\begin{equation*}
\phi (t)=\frac{r_{2:2}^{\ast }(t)}{r_{2:2}(t)}=\frac{\frac{(\lambda
_{1}^{\ast }t)^{\alpha }}{e^{(\lambda _{1}^{\ast }t)^{\alpha }}-1}+\frac{%
(\lambda t)^{\alpha }}{e^{(\lambda t)^{\alpha }}-1}}{\frac{(\lambda
_{1}t)^{\alpha }}{e^{(\lambda _{1}t)^{\alpha }}-1}+\frac{(\lambda t)^{\alpha
}}{e^{(\lambda t)^{\alpha }}-1}}.
\end{equation*}%
We have to show that $\phi ^{\prime }(t)\geq 0$ for all $t\geq 0$. The
derivative of $\phi (t)$ is, for $t\geq 0$,
\begin{eqnarray*}
\phi ^{\prime }(t)&\overset{\text{sign}}{=}&\left( \frac{\alpha \lambda
_{1}^{\ast }(\lambda _{1}^{\ast }t)^{\alpha -1}\left( e^{\left( \lambda
_{1}^{\ast }t\right) ^{\alpha }}-1-\left( \lambda _{1}^{\ast }t\right)
^{\alpha }e^{\left( \lambda _{1}^{\ast }t\right) ^{\alpha }}\right) }{\left(
e^{\left( \lambda _{1}^{\ast }t\right) ^{\alpha }}-1\right) ^{2}}+\frac{%
\alpha \lambda (\lambda t)^{\alpha -1}\left( e^{\left( \lambda t\right)
^{\alpha }}-1-\left( \lambda t\right) ^{\alpha }e^{\left( \lambda t\right)
^{\alpha }}\right) }{\left( e^{\left( \lambda t\right) ^{\alpha }}-1\right)
^{2}}\right)  \\
&&\left( \frac{(\lambda _{1}t)^{\alpha }}{e^{(\lambda _{1}t)^{\alpha }}-1}+%
\frac{(\lambda t)^{\alpha }}{e^{(\lambda t)^{\alpha }}-1}\right) -\left(
\frac{(\lambda _{1}^{\ast }t)^{\alpha }}{e^{(\lambda _{1}^{\ast }t)^{\alpha
}}-1}+\frac{(\lambda t)^{\alpha }}{e^{(\lambda t)^{\alpha }}-1}\right)  \\
&&\left( \frac{\alpha \lambda _{1}(\lambda _{1}t)^{\alpha -1}\left(
e^{\left( \lambda _{1}t\right) ^{\alpha }}-1-\left( \lambda _{1}t\right)
^{\alpha }e^{\left( \lambda _{1}t\right) ^{\alpha }}\right) }{\left(
e^{\left( \lambda _{1}t\right) ^{\alpha }}-1\right) ^{2}}+\frac{\alpha
\lambda (\lambda t)^{\alpha -1}\left( e^{\left( \lambda t\right) ^{\alpha
}}-1-\left( \lambda t\right) ^{\alpha }e^{\left( \lambda t\right) ^{\alpha
}}\right) }{\left( e^{\left( \lambda t\right) ^{\alpha }}-1\right) ^{2}}%
\right) .
\end{eqnarray*}%
After some computations, one get that%
\begin{eqnarray*}
\phi ^{\prime }(t)&\overset{\text{sign}}{=}&\frac{(\lambda _{1}^{\ast
}t)^{\alpha }(\lambda _{1}t)^{\alpha }}{\left( e^{(\lambda _{1}^{\ast
}t)^{\alpha }}-1\right) \left( e^{\left( \lambda _{1}t\right) ^{\alpha
}}-1\right) }\left( -\frac{\left( \lambda _{1}^{\ast }t\right) ^{\alpha }}{%
\left( 1-e^{-\left( \lambda _{1}^{\ast }t\right) ^{\alpha }}\right) }+\frac{%
\left( \lambda _{1}t\right) ^{\alpha }}{\left( 1-e^{-\left( \lambda
_{1}t\right) ^{\alpha }}\right) }\right)  \\
&&+\frac{(\lambda t)^{\alpha }(\lambda _{1}^{\ast }t)^{\alpha }}{\left(
e^{(\lambda t)^{\alpha }}-1\right) \left( e^{\left( \lambda _{1}^{\ast
}t\right) ^{\alpha }}-1\right) }\left( -\frac{\left( \lambda _{1}^{\ast
}t\right) ^{\alpha }}{1-e^{-\left( \lambda _{1}^{\ast }t\right) ^{\alpha }}}+%
\frac{\left( \lambda t\right) ^{\alpha }}{1-e^{-\left( \lambda t\right)
^{\alpha }}}\right)  \\
&&+\frac{(\lambda _{1}t)^{\alpha }(\lambda t)^{\alpha }}{\left( e^{(\lambda
_{1}t)^{\alpha }}-1\right) \left( e^{(\lambda t)^{\alpha }}-1\right) }\left(
-\frac{\left( \lambda t\right) ^{\alpha }}{\left( 1-e^{-\left( \lambda
t\right) ^{\alpha }}\right) }+\frac{\left( \lambda _{1}t\right) ^{\alpha }}{%
\left( 1-e^{-\left( \lambda _{1}t\right) ^{\alpha }}\right) }\right)  \\
&&+\frac{(\lambda t)^{\alpha }(\lambda t)^{\alpha }}{\left( e^{(\lambda
t)^{\alpha }}-1\right) \left( e^{\left( \lambda t\right) ^{\alpha
}}-1\right) }\left( -\frac{\left( \lambda t\right) ^{\alpha }}{\left(
1-e^{-\left( \lambda t\right) ^{\alpha }}\right) }+\frac{\left( \lambda
t\right) ^{\alpha }}{\left( 1-e^{-\left( \lambda t\right) ^{\alpha }}\right)
}\right) .
\end{eqnarray*}%
By using the functions defined in (\ref{eclemU}) and (\ref{eclemV}), then
the derivative of $\phi (t)$ can be rewritten by%
\begin{eqnarray*}
\phi ^{\prime }(t) &\overset{\text{sign}}{=}& u(\lambda _{1}^{\ast }t)u(\lambda
_{1}t)\left( -v(\lambda _{1}^{\ast }t)+v(\lambda _{1}t)\right) +u(\lambda
_{1}^{\ast }t)u(\lambda t)\left( -v(\lambda _{1}^{\ast }t)+v(\lambda
t)\right)  \\
&&+u(\lambda t)u(\lambda _{1}t)\left( -v(\lambda t)+v(\lambda _{1}t)\right) .
\end{eqnarray*}%
Note that $u\left( t\right) ,v(t)\geq 0$ for all $t\geq 0$. From Lemmas \ref%
{lemU} and \ref{lemV}, we know that $u\left( t\right) $ is decreasing and $%
v\left( t\right) $ is increasing in $t$. If $\lambda _{1}^{\ast }=\min
(\lambda ,\lambda _{1},\lambda _{1}^{\ast })$ and $\left( \lambda
_{1},\lambda \right) \preceq ^{w}\left( \lambda _{1}^{\ast },\lambda \right)
$, then $\lambda _{1}^{\ast }\leq \lambda \leq \lambda _{1}$ or $\lambda
_{1}^{\ast }\leq \lambda _{1}\leq \lambda $. When $\lambda _{1}^{\ast }\leq
\lambda \leq \lambda _{1}$, we have $\phi ^{\prime }(t)\geq 0$ since $%
v\left( \lambda _{1}^{\ast }t\right) \leq v\left( \lambda t\right) \leq
v\left( \lambda _{1}t\right) $. When $\lambda _{1}^{\ast }\leq \lambda
_{1}\leq \lambda $, we get
\begin{eqnarray*}
\phi ^{\prime }(t) &\geq &u(\lambda t)u(\lambda _{1}t)\left( -v(\lambda
_{1}^{\ast }t)+v(\lambda _{1}t)\right) +u(\lambda _{1}t)u(\lambda t)\left(
-v(\lambda _{1}^{\ast }t)+v(\lambda t)\right)  \\
&&+u(\lambda t)u(\lambda _{1}t)\left( -v(\lambda t)+v(\lambda _{1}t)\right)
\\
&=&2u(\lambda t)u(\lambda _{1}t)\left( -v(\lambda _{1}^{\ast }t)+v(\lambda
_{1}t)\right) \geq 0\text{.}
\end{eqnarray*}%
Therefore $r_{2:2}^{\ast }(t)/r_{2:2}(t)$ is increasing in $t$ for any $%
\alpha >0$.
\end{proof}

%%%%%%%%%%%%%%%%%%%%%%%%%%%%%%%%%%%%%%%%%%%%%%%%%%%%%%%%%%%%%%%%%%%%%%%%%%%%%%%%%%%%%%%%

\begin{theorem}
\label{th01}Let $X_{\lambda _{1}},X_{\lambda }$ be independent random
variables with $X_{\lambda _{1}}\sim W(\alpha ,\lambda _{1})$ and $%
X_{\lambda }\sim W(\alpha ,\lambda )$ where $\lambda _{1},\lambda >0$. Let $%
Y_{\lambda _{1}^{\ast }},Y_{\lambda }$ be independent random variables with $%
Y_{\lambda _{1}^{\ast }}\sim W(\alpha ,\lambda _{1}^{\ast })$ and $%
Y_{\lambda }\sim W(\alpha ,\lambda )$, where $\lambda _{1}^{\ast },\lambda
>0 $. Suppose $\lambda _{1}^{\ast }=\min (\lambda ,\lambda _{1},\lambda
_{1}^{\ast })$, then for any $\alpha >0$,
\begin{equation*}
\left( \lambda _{1},\lambda \right) \preceq ^{w}\left( \lambda _{1}^{\ast
},\lambda \right) \Rightarrow X_{2:2}\leq _{lr}Y_{2:2}.
\end{equation*}
\end{theorem}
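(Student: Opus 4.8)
The plan is to mirror the route from Theorems \ref{th07} and \ref{th08} to Theorem \ref{th09}: combine a reverse hazard rate comparison with the monotonicity of the ratio of reverse hazard rates, and then apply part (b) of Theorem 1.C.4 of \cite{Shaked:2007}. That result says that if $X_{2:2} \leq_{rh} Y_{2:2}$ and $r_{2:2}^{\ast}(t)/r_{2:2}(t)$ is increasing in $t$, then $X_{2:2} \leq_{lr} Y_{2:2}$. The monotonicity of the ratio is precisely the conclusion of Theorem \ref{th10}, established under exactly the present hypotheses, so the only genuinely new step is the reverse hazard rate ordering.

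First I would verify $X_{2:2} \leq_{rh} Y_{2:2}$ through the pointwise characterization of the reverse hazard rate order, namely that this ordering is equivalent to $r_{2:2}(t) \leq r_{2:2}^{\ast}(t)$ for all $t$. Expressing both reverse hazard rates by means of the function $u$ in (\ref{eclemU}) gives
\begin{equation*}
r_{2:2}^{\ast}(t) - r_{2:2}(t) = \frac{\alpha}{t}\left[ u(\lambda_1^{\ast} t) - u(\lambda_1 t) \right],
\end{equation*}
so everything reduces to the single inequality $u(\lambda_1^{\ast} t) \geq u(\lambda_1 t)$. The assumption $\lambda_1^{\ast} = \min(\lambda, \lambda_1, \lambda_1^{\ast})$ forces $\lambda_1^{\ast} \leq \lambda_1$, and Lemma \ref{lemU} tells us that $u$ is decreasing for \emph{every} $\alpha > 0$; hence the inequality holds for all $t \geq 0$, giving $X_{2:2} \leq_{rh} Y_{2:2}$ without any restriction on $\alpha$.

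With both hypotheses of Theorem 1.C.4(b) in hand, the conclusion $X_{2:2} \leq_{lr} Y_{2:2}$ is immediate. I anticipate no real obstacle: in contrast to the results requiring $0 < \alpha \leq 1$, where the convexity of $u$ and the sign analysis of $w$ in (\ref{eclemW}) did the heavy lifting, the present theorem leans only on the monotonicity of $u$, which is the half of Lemma \ref{lemU} that survives for all $\alpha > 0$. All the delicate analysis is already packaged inside Theorem \ref{th10}; the remaining work is the routine implication connecting the reverse hazard rate and likelihood ratio orders.
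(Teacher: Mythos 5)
Your proof is correct, and it follows the same skeleton as the paper's own argument: Theorem \ref{th10} supplies the monotonicity of $r_{2:2}^{\ast }(t)/r_{2:2}(t)$, a reverse hazard rate comparison is established, and Theorem 1.C.4(b) of \cite{Shaked:2007} then yields $X_{2:2}\leq _{lr}Y_{2:2}$. The one place where you diverge is the reverse hazard rate step, and your treatment is the more careful one. The paper obtains $X_{2:2}\leq _{rh}Y_{2:2}$ by citing Theorem \ref{th07}; but that theorem is stated and proved only for $0<\alpha \leq 1$ (its proof rests on the convexity of $u$, which fails for $\alpha >1$), whereas Theorem \ref{th01} claims the conclusion for every $\alpha >0$. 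As literally written, the paper's proof therefore has a gap for $\alpha >1$. Your argument closes it: since the two systems share the common component $\lambda $, the pointwise characterization of the $\leq _{rh}$ order reduces the comparison to the single inequality $u(\lambda _{1}^{\ast }t)\geq u(\lambda _{1}t)$, which follows from $\lambda _{1}^{\ast }\leq \lambda _{1}$ (forced by the minimum assumption) together with the fact that $u$ is decreasing for \emph{all} $\alpha >0$ (Lemma \ref{lemU}); no convexity, and hence no restriction on $\alpha $, is needed. In short, your route honestly delivers the full range $\alpha >0$ asserted in the statement, while the paper's citation of Theorem \ref{th07} formally covers only $0<\alpha \leq 1$.
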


\begin{proof}
From Theorem \ref{th10}, we know that $r_{2:2}^{\ast }(t)/r_{2:2}(t)$ is
increasing in $t$ under the given assumptions. Since $\left( \lambda
_{1},\lambda \right) \preceq ^{w}\left( \lambda _{1}^{\ast },\lambda \right)
$, it follows from Theorem \ref{th07} that $X_{2:2}\leq _{rh}Y_{2:2}$. Thus
the required result follows from Theorem 1.C.4 in \cite{Shaked:2007}.
\end{proof}

\vskip .5em
%%%%%%%%%%%%%%%%%%%%%%%%%%%%%%%%%%%%%%%%%%%%%%%%%%%%%%%%%%%%%%%%%%%%%%%%%%%%%%%%%%%%%%%%

Note that when $\left( \lambda _{1},\lambda \right) \preceq ^{w}\left(
\lambda _{1}^{\ast },\lambda \right) $ we have the following three
possibilities:%
\begin{equation*}
\lambda _{1}^{\ast }\leq \lambda \leq \lambda _{1},\text{ }\lambda
_{1}^{\ast }\leq \lambda _{1}\leq \lambda \text{ or }\lambda \leq \lambda
_{1}^{\ast }\leq \lambda _{1}\text{.}
\end{equation*}%
The two first are included in assumption of Theorem \ref{th01} and so a
natural question is whether this theorem holds for $\lambda \leq \lambda
_{1}^{\ast }\leq \lambda _{1}$. The following example gives a negative
answer.

\begin{example}
Let $\left( X_{\lambda _{1}},X_{\lambda }\right) $ be a vector of
heterogeneous Weibull random variables with $\alpha =0.3$ and scale
parameters $\lambda =0.2$ and $\lambda _{1}=3.5$. Let $\left( Y_{\lambda
_{1}^{\ast }},Y_{\lambda }\right) $ be a vector of heterogeneous Weibull
random variables with $\alpha =0.3$ and scale parameters $\lambda =0.2$ and $%
\lambda _{1}^{\ast }=2$. Obviously $\left( \lambda _{1},\lambda \right)
\preceq ^{w}\left( \lambda _{1}^{\ast },\lambda \right) $ and $\lambda \leq
\lambda _{1}^{\ast }\leq \lambda _{1}$. However $X_{2:2}\nleq _{lr}Y_{2:2}$
since $f_{2:2}^{\ast }(t)/f_{2:2}(t)$ is not increasing in $t$ as it can be
seen from Figure \ref{figcocpdfWeib-03}. Analogously, from Figure \ref%
{figcocpdfWeib-13}, it can be seen that $X_{2:2}\nleq _{lr}Y_{2:2}$ when $%
\alpha =1.3$.
\end{example}

\begin{figure}[!ht]
\centering
\includegraphics[scale=0.7, keepaspectratio]{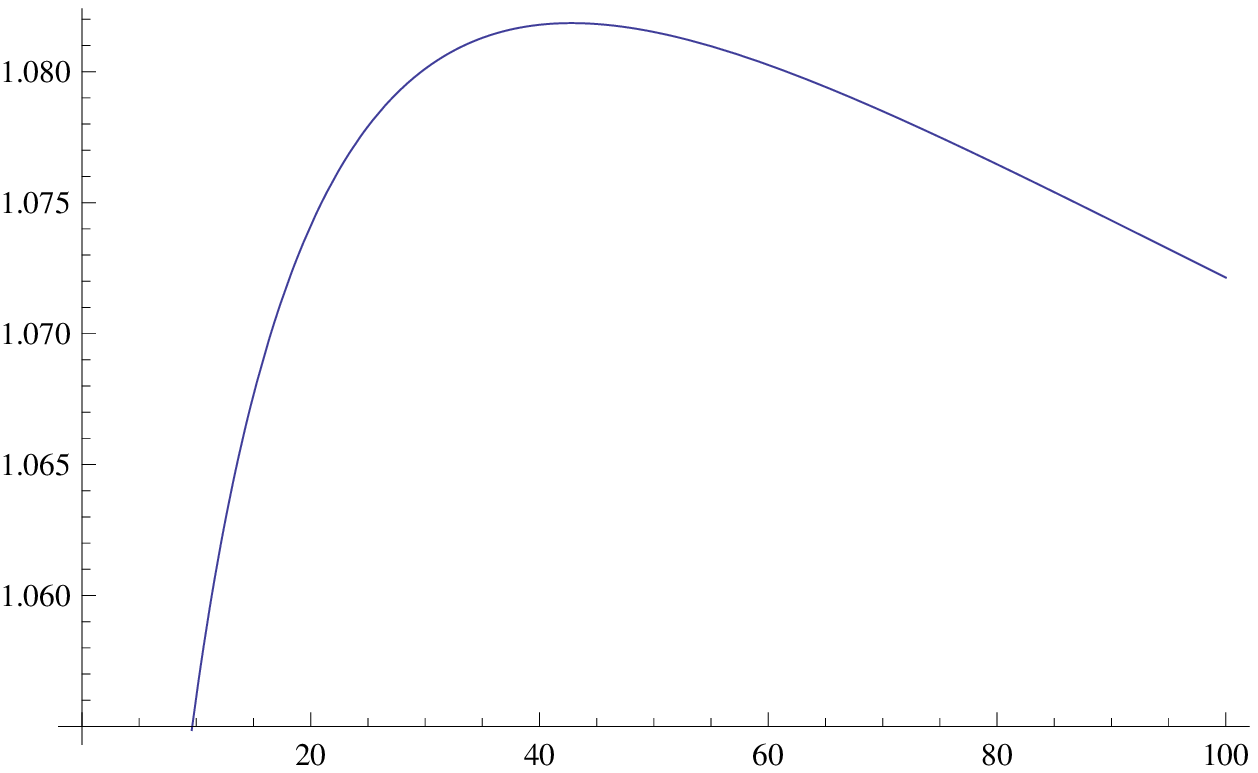}
\caption{Plot of the $f_{2:2}^{* }(t)/f_{2:2}(t)$ when $\protect\alpha =0.3$%
, $\protect\lambda^{*} =\left( 0.2,2\right) $ and $\protect\lambda =\left(
0.2,3.5\right) $ for random variables with Weibull distributions}
\label{figcocpdfWeib-03}
\end{figure}
\begin{figure}[!ht]
\centering
\includegraphics[scale=0.7, keepaspectratio]{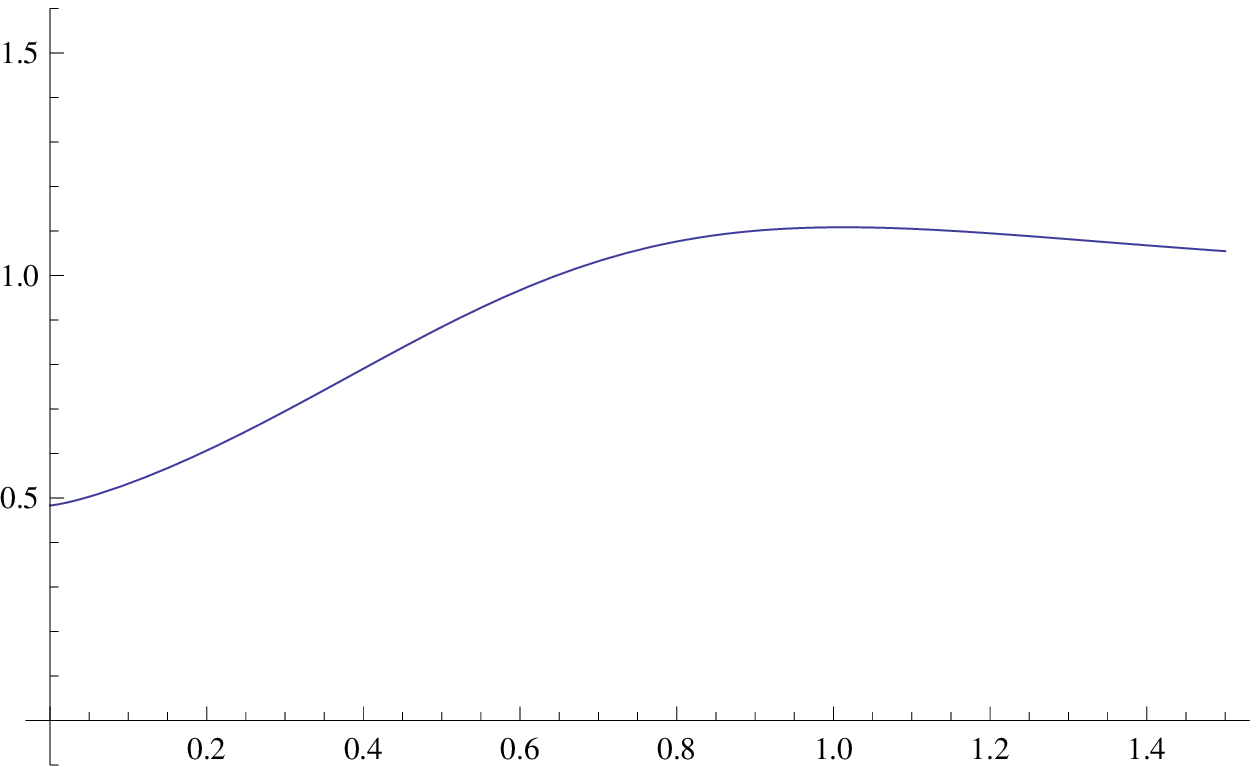}
\caption{Plot of the $f_{2:2}^{* }(t)/f_{2:2}(t)$ when $\protect\alpha =1.3$%
, $\protect\lambda^{*} =\left( 0.2,2\right) $ and $\protect\lambda =\left(
0.2,3.5\right) $ for random variables with Weibull distributions}
\label{figcocpdfWeib-13}
\end{figure}
%%
%%
%%
%%%%%%%%%%%%%%%%%%%%%%%%%%%%%%%%%%%%%%%%%%%%%%%%%%%%%%%%%%%%%%%%%%%%%%%%%%%%%%%%%%%%%%%%

From Theorems \ref{th01} and \ref{th09}, the following result can be proved
using arguments similar to those used in Theorem 3.6 of \cite{Zhao:2011}.

\begin{theorem}
\label{th11} Let $X_{\lambda _{1}},X_{\lambda _{2}}$ be independent random
variables with $X_{\lambda _{i}}\sim W(\alpha ,\lambda _{i})$ where $\lambda
_{i}>0$, $i=1,2$, and let $X_{\theta _{1}},X_{\theta _{2}}$ be another set
of independent random variables with $X_{\theta _{i}}\sim W(\alpha ,\theta
_{i})$ where $\theta _{i}>0$, $i=1,2$. Suppose $\theta _{1}\leq \lambda
_{1}\leq \lambda _{2}\leq \theta _{2}$. Then $\left( \lambda _{1},\lambda
_{2}\right) \preceq ^{w}\left( \theta _{1},\theta _{2}\right) $ implies that
$X_{2:2}^{\lambda }\leq _{lr}X_{2:2}^{\theta }$, for $0<\alpha \leq 1$.
\end{theorem}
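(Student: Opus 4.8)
The plan is to interpolate between the two scale vectors through a single well-chosen intermediate vector and then apply Theorems \ref{th09} and \ref{th01} to the two legs, chaining the conclusions by the transitivity of $\le_{lr}$. First I would unpack the hypotheses. Since $\theta_1\le\lambda_1\le\lambda_2\le\theta_2$, the coordinates are already in increasing order, so the weak-majorization assumption $(\lambda_1,\lambda_2)\preceq^w(\theta_1,\theta_2)$ reduces to the single inequality $\lambda_1+\lambda_2\ge\theta_1+\theta_2$ (the first partial-sum condition $\lambda_1\ge\theta_1$ being automatic from the chain of inequalities).

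The heart of the argument is the intermediate vector $(\mu,\theta_2)$ with $\mu=\lambda_1+\lambda_2-\theta_2$, which keeps the \emph{larger} target coordinate $\theta_2$ fixed. From $\lambda_2\le\theta_2$ one obtains $\mu\le\lambda_1$, and from the sum inequality $\lambda_1+\lambda_2\ge\theta_1+\theta_2$ one obtains $\mu\ge\theta_1>0$; hence $\theta_1\le\mu\le\lambda_1\le\lambda_2\le\theta_2$. In particular $(\mu,\theta_2)$ has the same coordinate sum as $(\lambda_1,\lambda_2)$ but a smaller minimum, so $(\lambda_1,\lambda_2)\preceq^m(\mu,\theta_2)$. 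Since $0<\alpha\le1$, Theorem \ref{th09} then gives the first leg $X_{2:2}^{(\lambda_1,\lambda_2)}\le_{lr}X_{2:2}^{(\mu,\theta_2)}$.

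For the second leg, the vectors $(\mu,\theta_2)$ and $(\theta_1,\theta_2)$ share the common coordinate $\theta_2$ and differ only in the other coordinate, which is lowered from $\mu$ down to $\theta_1$. Because $\theta_1=\min(\theta_2,\mu,\theta_1)$ and $(\mu,\theta_2)\preceq^w(\theta_1,\theta_2)$ (this weak-majorization again reduces to $\mu\ge\theta_1$), Theorem \ref{th01} applies with common parameter $\theta_2$ and yields $X_{2:2}^{(\mu,\theta_2)}\le_{lr}X_{2:2}^{(\theta_1,\theta_2)}$. Combining the two legs by transitivity of the likelihood ratio order produces $X_{2:2}^{\lambda}\le_{lr}X_{2:2}^{\theta}$, which is the assertion.

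The one genuinely delicate point is the choice of the intermediate vector, and this is where an incautious selection derails the proof. One must fix the \emph{larger} coordinate $\theta_2$ (not the smaller one $\theta_1$) so that the two legs point in the same lr-direction: fixing $\theta_1$ instead produces an intermediate vector that is \emph{more} spread than $(\theta_1,\theta_2)$, which reverses the inequality on the second leg and breaks the chain. Everything else is bookkeeping — verifying $\mu\ge\theta_1>0$ from the weak-majorization sum condition, checking the elementary majorization and weak-majorization relations, and confirming that the standing hypotheses of Theorem \ref{th09} (equal sums and $0<\alpha\le1$) and of Theorem \ref{th01} (minimality of the lowered coordinate, valid for all $\alpha>0$) are met.
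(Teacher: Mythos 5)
Your proof is correct and is essentially the paper's intended argument: the paper omits the details, saying only that the result follows from Theorems \ref{th09} and \ref{th01} by arguments similar to those in Theorem 3.6 of Zhao (2011), and that argument is exactly your interpolation through the intermediate vector $(\mu,\theta_2)$ with $\mu=\lambda_1+\lambda_2-\theta_2$, followed by transitivity of the likelihood ratio order. Your verification of the hypotheses on each leg (equal sums and $0<\alpha\le 1$ for Theorem \ref{th09}; minimality of the lowered coordinate, $\theta_1=\min(\theta_2,\mu,\theta_1)$, for Theorem \ref{th01}) is sound, so the chain $X_{2:2}^{\lambda}\leq_{lr}X_{2:2}^{(\mu,\theta_2)}\leq_{lr}X_{2:2}^{\theta}$ gives the claim.
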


Note that Theorem 2.4 in \cite{Joo:2010} for two heterogeneous exponential
random variables can be seen as a particular case of Theorem \ref{th11},
since likelihood ratio order implies hazard rate order and exponential
distributions are a particular case of Weibull distributions.

%%%%%%%%%%%%%%%%%%%%%%%%%%%%%%%%%%%%%%%%%%%%%%%%%%%%%%%%%%%%%%%%%%%%%%%%%%%%%%%%%%%%%%%%

Next we extend the study of likelihood ratio ordering among parallel systems
from the two-variable case to multiple-outlier Weibull models.

\begin{theorem}
\label{th17}Let $X_{1},\ldots ,X_{n}$ be independent random variables
following the multiple-outlier Weibull model such that $X_{i}\sim W(\alpha
,\lambda _{1})$ for $i=1,\ldots ,p$\ and $X_{j}\sim W(\alpha ,\lambda )$ for
$j=p+1,\ldots ,n$, with $\lambda _{1},\lambda >0$. Let $Y_{1},\ldots ,Y_{n}$
be another set of independent random variables following the
multiple-outlier Weibull model with $Y_{i}\sim W(\alpha ,\lambda _{1}^{\ast
})$ for $i=1,\ldots ,p$ and $Y_{j}\sim W(\alpha ,\lambda )$ for $%
j=p+1,\ldots ,n$, with $\lambda _{1}^{\ast },\lambda >0$. Suppose $\lambda
_{1}^{\ast }=\min (\lambda ,\lambda _{1},\lambda _{1}^{\ast })$, then for
any $\alpha >0$,
\begin{equation*}
(\underbrace{\lambda _{1},\ldots ,\lambda _{1}}_{p},\underbrace{\lambda,\ldots ,\lambda}_{q})
\preceq ^{w}
(\underbrace{\lambda _{1}^{\ast },\ldots ,\lambda _{1}^{\ast }}_{p},\underbrace{\lambda,\ldots ,\lambda}_{q})
\Rightarrow \frac{r_{n:n}^{\ast }(t)}{r_{n:n}(t)}%
\text{ is increasing in }t,
\end{equation*}
where $p+q=n$.
\end{theorem}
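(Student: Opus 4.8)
The plan is to mimic the structure of the proof of Theorem \ref{th10}, since Theorem \ref{th17} is the multiple-outlier generalization of exactly that two-variable result. The ratio $\phi(t) = r_{n:n}^{\ast}(t)/r_{n:n}(t)$ now takes the form
\begin{equation*}
\phi(t) = \frac{p\,u(\lambda_1^{\ast}t) + q\,u(\lambda t)}{p\,u(\lambda_1 t) + q\,u(\lambda t)},
\end{equation*}
where $u$ is the function from (\ref{eclemU}), because each of the $p$ outliers contributes an identical term and each of the $q$ baseline components contributes an identical term. First I would compute $\phi'(t)$ and reduce its sign, using the identity $u'(t) = (\alpha/t)\,u(t)s(t)$ with $s(t) = 1 - v(t)$ established in the proof of Theorem \ref{th08}, to an expression built from products $u(\lambda_i t)u(\lambda_j t)$ weighted by differences $v(\lambda_i t) - v(\lambda_j t)$ and by the multiplicities $p$ and $q$.

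The key step will be to carry out the algebra so that $\phi'(t)$ collects into a sum of terms each of the shape $c\,u(at)u(bt)\bigl(v(bt)-v(at)\bigr)$ with nonnegative coefficients $c \in \{p^2, pq, q^2\}$, exactly paralleling the three-term decomposition that appears in Theorem \ref{th10} (the cross term between the two baseline copies, $u(\lambda t)^2(v(\lambda t)-v(\lambda t))$, vanishes, and the interaction between outliers and baseline splits according to $p$ and $q$). Since $u(t),v(t)\ge 0$ for all $t\ge 0$ by Lemmas \ref{lemU} and \ref{lemV}, and $v$ is increasing while $u$ is decreasing, the sign of each term is governed solely by the ordering of the scale parameters inside $v$.

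The decisive input is the weak-majorization hypothesis together with $\lambda_1^{\ast} = \min(\lambda,\lambda_1,\lambda_1^{\ast})$. As in Theorem \ref{th10}, this forces one of the two chains $\lambda_1^{\ast}\le\lambda\le\lambda_1$ or $\lambda_1^{\ast}\le\lambda_1\le\lambda$. In the first chain $v(\lambda_1^{\ast}t)\le v(\lambda t)\le v(\lambda_1 t)$ makes every term nonnegative directly, so $\phi'(t)\ge 0$ immediately. In the second chain one term has the wrong sign, and I would handle it exactly as Theorem \ref{th10} does: bound the offending term below by replacing $v(\lambda_1 t)$ with the smaller $v(\lambda t)$ (equivalently, regroup so a negative contribution is dominated by a positive one), using $u(\lambda_1 t)\le u(\lambda t)$ to control the coefficient, and conclude $\phi'(t)\ge 0$.

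The main obstacle will be the bookkeeping in this second case: with the multiplicities $p$ and $q$ the single compensating argument of Theorem \ref{th10} must be redone with the correct weights so that the nonnegative terms genuinely dominate the one negative term for \emph{all} admissible $p,q\ge 1$ with $p+q=n$. I expect this to go through because the negative term always carries the factor $u(\lambda_1 t)$, which is the smallest of the $u$-values, and is matched against a positive term carrying the larger $u(\lambda t)$ with a coefficient at least as large; the monotonicity of $u$ and $v$ then closes the estimate independently of the split $p+q=n$.
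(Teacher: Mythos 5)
Your plan follows the paper's proof line for line: the ratio $\phi(t)=\bigl(p\,u(\lambda_1^{\ast}t)+q\,u(\lambda t)\bigr)/\bigl(p\,u(\lambda_1 t)+q\,u(\lambda t)\bigr)$, the reduction of the sign of $\phi'(t)$ to
\begin{equation*}
\Delta(t)=p^{2}u(\lambda_{1}^{\ast}t)u(\lambda_{1}t)\bigl(v(\lambda_{1}t)-v(\lambda_{1}^{\ast}t)\bigr)+pq\,u(\lambda_{1}^{\ast}t)u(\lambda t)\bigl(v(\lambda t)-v(\lambda_{1}^{\ast}t)\bigr)+pq\,u(\lambda t)u(\lambda_{1}t)\bigl(v(\lambda_{1}t)-v(\lambda t)\bigr),
\end{equation*}
the two chains $\lambda_1^{\ast}\le\lambda\le\lambda_1$ and $\lambda_1^{\ast}\le\lambda_1\le\lambda$ forced by the hypotheses, and the observation that the first chain is immediate are all exactly what the paper does (the paper itself only says ``as in the proof of Theorem~\ref{th10}''). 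One small slip: the surviving coefficients are $p^{2},pq,pq$, not $p^{2},pq,q^{2}$; it is the $q^{2}$ cross term that cancels.

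However, your treatment of the second chain --- the only place where real work is needed --- has the monotonicity inequalities reversed, and the step would fail as written. When $\lambda_1^{\ast}\le\lambda_1\le\lambda$, decreasingness of $u$ gives $u(\lambda_1 t)\ge u(\lambda t)$ (not $\le$, as you claim), so the smallest $u$-value is $u(\lambda t)$, not $u(\lambda_1 t)$; likewise $v$ increasing gives $v(\lambda t)\ge v(\lambda_1 t)$, so $v(\lambda t)$ is not ``smaller'' than $v(\lambda_1 t)$, and replacing $v(\lambda_1 t)$ by $v(\lambda t)$ in the negative term $pq\,u(\lambda t)u(\lambda_1 t)\bigl(v(\lambda_{1}t)-v(\lambda t)\bigr)$ turns it into $0$, which is an \emph{upper} bound for that term and hence useless for bounding $\Delta(t)$ from below. (This is also not what Theorem~\ref{th10} actually does.) The correct compensation acts on the \emph{positive} terms: replace the largest $u$-value $u(\lambda_1^{\ast}t)$ by $u(\lambda t)$ in the $p^{2}$-term and by $u(\lambda_1 t)$ in the positive $pq$-term; both replacements shrink nonnegative quantities, after which every term carries the common factor $u(\lambda t)u(\lambda_{1}t)$ and the $v$-differences telescope:
\begin{equation*}
\Delta(t)\ge u(\lambda t)u(\lambda_{1}t)\Bigl[p^{2}\bigl(v(\lambda_{1}t)-v(\lambda_{1}^{\ast}t)\bigr)+pq\bigl(v(\lambda t)-v(\lambda_{1}^{\ast}t)\bigr)+pq\bigl(v(\lambda_{1}t)-v(\lambda t)\bigr)\Bigr]=np\,u(\lambda t)u(\lambda_{1}t)\bigl(v(\lambda_{1}t)-v(\lambda_{1}^{\ast}t)\bigr)\ge 0.
\end{equation*}
The negative $pq$-term is dominated because it shares its coefficient $pq$ with the second positive term and the $v$-increments add up, not because of any comparison between $u(\lambda_1 t)$ and $u(\lambda t)$ in the direction you state. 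With this correction your argument coincides with the paper's.
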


\begin{proof}
From (\ref{ecrh}) we have the reverse hazard rate function of $X_{n:n}$
\begin{equation*}
r_{n:n}(t)=\frac{\alpha }{t}\left( p\frac{\left( \lambda _{1}t\right)
^{\alpha }}{e^{(\lambda _{1}t)^{\alpha }}-1}+q\frac{\left( \lambda t\right)
^{\alpha }}{e^{(\lambda t)^{\alpha }}-1}\right) ,
\end{equation*}%
where $p+q=n$. Let%
\begin{equation*}
\phi (t)=\frac{r_{n:n}^{\ast }(t)}{r_{n:n}(t)}=\frac{p\frac{(\lambda
_{1}^{\ast }t)^{\alpha }}{e^{(\lambda _{1}^{\ast }t)^{\alpha }}-1}+q\frac{%
(\lambda t)^{\alpha }}{e^{(\lambda t)^{\alpha }}-1}}{p\frac{(\lambda
_{1}t)^{\alpha }}{e^{(\lambda _{1}t)^{\alpha }}-1}+q\frac{(\lambda
t)^{\alpha }}{e^{(\lambda t)^{\alpha }}-1}}.
\end{equation*}%
As in the proof of Theorem \ref{th10}, for $t\geq 0$, the derivative of $%
\phi (t)$ can be rewritten by%
\begin{eqnarray*}
\phi ^{\prime }(t)&\overset{\text{sign}}{=}& p^{2}u(\lambda _{1}^{\ast
}t)u(\lambda _{1}t)\left( -v(\lambda _{1}^{\ast }t)+v(\lambda _{1}t)\right)
+pqu(\lambda _{1}^{\ast }t)u(\lambda t)\left( -v(\lambda _{1}^{\ast
}t)+v(\lambda t)\right)  \\
&&+pqu(\lambda t)u(\lambda _{1}t)\left( -v(\lambda t)+v(\lambda
_{1}t)\right) .
\end{eqnarray*}%
Note that $u\left( t\right) ,v(t)\geq 0$ for all $t\geq 0$. From Lemmas \ref%
{lemU} and \ref{lemV}, we know that $u\left( t\right) $ is decreasing and $%
v\left( t\right) $ is increasing in $t$. If $\lambda _{1}^{\ast }=\min
(\lambda ,\lambda _{1},\lambda _{1}^{\ast })$ and $\left( \lambda
_{1},\lambda \right) \preceq ^{w}\left( \lambda _{1}^{\ast },\lambda \right)
$, then $\lambda _{1}^{\ast }\leq \lambda \leq \lambda _{1}$ or $\lambda
_{1}^{\ast }\leq \lambda _{1}\leq \lambda $. When $\lambda _{1}^{\ast }\leq
\lambda \leq \lambda _{1}$, we have $\phi ^{\prime }(t)\geq 0$ since $%
v\left( \lambda _{1}^{\ast }t\right) \leq v\left( \lambda t\right) \leq
v\left( \lambda _{1}t\right) $. When $\lambda _{1}^{\ast }\leq \lambda
_{1}\leq \lambda $, we get
\begin{eqnarray*}
\phi ^{\prime }(t) &\geq &p^{2}u(\lambda t)u(\lambda _{1}t)\left( -v(\lambda
_{1}^{\ast }t)+v(\lambda _{1}t)\right) +pqu(\lambda _{1}t)u(\lambda t)\left(
-v(\lambda _{1}^{\ast }t)+v(\lambda t)\right)  \\
&&+pqu(\lambda t)u(\lambda _{1}t)\left( -v(\lambda t)+v(\lambda
_{1}t)\right)  \\
&=&npu(\lambda t)u(\lambda _{1}t)\left( -v(\lambda _{1}^{\ast }t)+v(\lambda
_{1}t)\right) \geq 0\text{.}
\end{eqnarray*}%
Therefore $r_{n:n}^{\ast }(t)/r_{n:n}(t)$ is increasing in $t$ for any $%
\alpha >0$.
\end{proof}

%%%%%%%%%%%%%%%%%%%%%%%%%%%%%%%%%%%%%%%%%%%%%%%%%%%%%%%%%%%%%%%%%%%%%%%%%%%%%%%%%%%%%%%%

\begin{theorem}
\label{th14}Let $X_{1},\ldots ,X_{n}$ be independent random variables such
that $X_{i}\sim W(\alpha ,\lambda _{1})$ for $i=1,\ldots ,p$\ and $X_{j}\sim
W(\alpha ,\lambda )$ for $j=p+1,\ldots ,n$, with $\lambda _{1},\lambda >0$.
Let $Y_{1},\ldots ,Y_{n}$ be independent nonnegative random variables with $%
Y_{i}\sim W(\alpha ,\lambda _{1}^{\ast })$ for $i=1,\ldots ,p$ and $%
Y_{j}\sim W(\alpha ,\lambda )$ for $j=p+1,\ldots ,n$, with $\lambda
_{1}^{\ast },\lambda >0$. Suppose $\lambda _{1}^{\ast }=\min (\lambda
,\lambda _{1},\lambda _{1}^{\ast })$, then%
\begin{equation*}
(\underbrace{\lambda _{1},\ldots ,\lambda _{1}}_{p},\underbrace{\lambda,\ldots ,\lambda}_{q})
\preceq ^{w}
(\underbrace{\lambda _{1}^{\ast },\ldots ,\lambda _{1}^{\ast }}_{p},\underbrace{\lambda,\ldots ,\lambda}_{q})
\Rightarrow X_{n:n}\leq _{lr}Y_{n:n},
\end{equation*}
for $p+q=n$ and any $\alpha >0$.
\end{theorem}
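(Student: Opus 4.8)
The plan is to reproduce, in the multiple-outlier setting, the same two-ingredient argument that yields Theorem \ref{th01}: first secure the reverse hazard rate order $X_{n:n}\leq _{rh}Y_{n:n}$, then combine it with the monotonicity of the ratio of reverse hazard rates (already delivered by Theorem \ref{th17}) and invoke part (b) of Theorem 1.C.4 of \cite{Shaked:2007} to strengthen the conclusion to likelihood ratio order. Since the genuinely analytic work is done inside Theorem \ref{th17}, the steps that remain are short.

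First I would establish the reverse hazard rate order directly, rather than via Theorem \ref{th07}; the latter is stated only for $0<\alpha\le 1$, whereas the present claim is asserted for all $\alpha>0$, so a self-contained argument is needed to preserve the full range. From (\ref{ecrh}) the two systems share the block of $q$ components with scale $\lambda$ and differ only in the block of $p$ outlier components, so
\begin{equation*}
r_{n:n}^{\ast }(t)-r_{n:n}(t)=\frac{p\alpha }{t}\bigl(u(\lambda _{1}^{\ast }t)-u(\lambda _{1}t)\bigr),
\end{equation*}
where $u$ is the function in (\ref{eclemU}). By Lemma \ref{lemU}, $u$ is decreasing for every $\alpha>0$, and the hypothesis $\lambda _{1}^{\ast }=\min (\lambda ,\lambda _{1},\lambda _{1}^{\ast })$ forces $\lambda _{1}^{\ast }\le \lambda _{1}$; hence $u(\lambda _{1}^{\ast }t)\ge u(\lambda _{1}t)$ and $r_{n:n}^{\ast }(t)\ge r_{n:n}(t)$ for all $t\ge 0$. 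This is exactly $X_{n:n}\leq _{rh}Y_{n:n}$, valid for any $\alpha>0$.

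Next, Theorem \ref{th17} asserts, under precisely these hypotheses and for any $\alpha>0$, that $r_{n:n}^{\ast }(t)/r_{n:n}(t)$ is increasing in $t$. Thus both premises of Theorem 1.C.4(b) of \cite{Shaked:2007} are met — the reverse hazard rate order $X_{n:n}\leq _{rh}Y_{n:n}$ together with an increasing ratio of reverse hazard rates $r_{n:n}^{\ast }/r_{n:n}$ — and the criterion yields $X_{n:n}\leq _{lr}Y_{n:n}$, which is the desired conclusion.

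There is essentially no remaining obstacle once Theorem \ref{th17} is available: the main difficulty (controlling the sign of $\phi'(t)$ through the interplay of the decreasing $u$ and the increasing $v$ in the weakly-majorized regime $\lambda _{1}^{\ast }\le \lambda \le \lambda _{1}$ or $\lambda _{1}^{\ast }\le \lambda _{1}\le \lambda$) has already been absorbed there. The only point demanding care is the one flagged above, namely obtaining $X_{n:n}\leq _{rh}Y_{n:n}$ from the monotonicity of $u$ alone so that the statement holds for every $\alpha>0$; the final implication is then a routine citation of the standard criterion.
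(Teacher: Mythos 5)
Your proposal is correct, and it follows the same overall skeleton as the paper's proof: get the increasing ratio $r_{n:n}^{\ast}(t)/r_{n:n}(t)$ from Theorem \ref{th17}, get the reverse hazard rate order $X_{n:n}\leq_{rh}Y_{n:n}$, and conclude by Theorem 1.C.4(b) of \cite{Shaked:2007}. The one place where you diverge is the reverse hazard rate step, and your divergence is an improvement. The paper obtains $X_{n:n}\leq_{rh}Y_{n:n}$ by citing Theorem \ref{th07}, but that theorem is stated (and proved, via Schur-convexity of $r_{n:n}^{\lambda}(t)$, which requires convexity of $u$) only for $0<\alpha\leq 1$, while Theorem \ref{th14} claims the conclusion for every $\alpha>0$; as written, the paper's proof therefore has a gap for $\alpha>1$. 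Your direct computation avoids this entirely: since the two systems share the block of $q$ components with scale $\lambda$,
\begin{equation*}
r_{n:n}^{\ast }(t)-r_{n:n}(t)=\frac{p\alpha }{t}\bigl(u(\lambda _{1}^{\ast }t)-u(\lambda _{1}t)\bigr)\geq 0,
\end{equation*}
using only that $u$ is decreasing for all $\alpha>0$ (Lemma \ref{lemU}) and that $\lambda _{1}^{\ast }=\min (\lambda ,\lambda _{1},\lambda _{1}^{\ast })$ forces $\lambda _{1}^{\ast }\leq \lambda _{1}$, and pointwise dominance of reverse hazard rates is equivalent to the rh order. In effect, you observed that in this one-block-change setting no Schur-convexity (hence no restriction on $\alpha$) is needed, only coordinatewise monotonicity; this is exactly what is required to make the theorem's claim of validity for all $\alpha>0$ legitimate, so your argument is not merely an alternative but a repair of the paper's citation.
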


\begin{proof}
From Theorem \ref{th17}, we know that under the given conditions,$%
r_{n:n}^{\ast }(t)/r_{n:n}(t)$ is increasing in $t$. Since $\left( \lambda
_{1},\ldots ,\lambda _{1},\lambda ,\ldots ,\lambda \right) \preceq
^{w}\left( \lambda _{1}^{\ast },\ldots ,\lambda _{1}^{\ast },\lambda ,\ldots
,\lambda \right) $, then $X_{n:n}\leq _{rh}Y_{n:n}$ from Theorem \ref{th07}.
Thus the required result follows from Theorem 1.C.4 in \cite{Shaked:2007}.
\end{proof}

This result is similar to Theorem 4.7 without any restriction on the common
shape parameter with but with an additional constraint on the scale
parameters.
%---------------------------------------
%---------------------------------------
%%%%%%%%%%%%%%%%%%%%%%%
%%%%%%%%%%%%%%%%%%%%%%%%%%%%%%%%%%%%%%%%%%%%%%%%%%%%%%%%%%%%%%%%%%%%%%%%%%%%%%%%%%%%%%%%%%%%%%%%

\section*{Acknowledgements}

The research of N.T. was supported by the Portuguese Government through the
Funda\c{c}\~{a}o para a Ci\^{e}ncia e Tecnologia (FCT) under the grant
SFRH/BPD/91832/2012 and partially supported by the Centro de Matem\'{a}tica
da Universidade de Coimbra (CMUC), funded by the European Regional
Development Fund through the program COMPETE and by the Portuguese
Government through the FCT - Funda\c{c}\~{a}o para a Ci\^{e}ncia e a
Tecnologia under the project PEst-C/MAT/UI0324/2013.
The authors are thankful to the Editor and the referee
for their constructive comments and suggestions which have improved the
presentation of the paper.

%---------------------------------------
\bigskip

\end{document}